\documentclass[a4paper]{article}

\usepackage{amsmath}
\usepackage{amssymb}
\usepackage{amsthm}
\usepackage{amsfonts}
\usepackage{array}
\usepackage{bbm}
\usepackage{indentfirst}
\usepackage{paralist}
\usepackage{geometry}
\usepackage{color}
\usepackage{multimedia}
\usepackage{multicol}
\usepackage{multirow}
\usepackage{bbm}
\usepackage{graphics}
\usepackage{graphicx}
\usepackage{booktabs}
\usepackage{subfigure}
\usepackage{grffile}
\usepackage{float}
\usepackage{url}
\usepackage[labelsep=quad]{caption}       
\usepackage{mathrsfs}
\usepackage{cases}
\usepackage{lscape}

\newtheorem{proposition}{Proposition}
\newtheorem{assumption}{Assumption}
\newtheorem{lemma}{Lemma}
\newtheorem{theorem}{Theorem}[section]

\newtheorem{corollary}{Corollary}
\newtheorem{example}{Example}
\begin{document}

\bibliographystyle{abbrv}

\title{$C^{2, \alpha}$ solution to  proportional transaction cost problem with two risky assets}
\author{Chen Xinfu,\ Qian Shuaijie}
\maketitle
\begin{abstract}
   This paper concerns the variational inequality arising from a specific singular control problem, portfolio selection under proportional transaction costs. This variational inequality is a gradient-constrained partial differential equation (PDE). Generally, the literature only guarantees the $W^2_\infty$ regularity of the solution to such PDE and counter-examples exists for higher regularity. 
   
   In this paper, by exploiting the concavity of the value function, we
   connect this gradient-constrained problem to an obstacle problem, and  finally show that the solution is $C^{2, \alpha}$.  
   This paper concerns a two-dimensional portfolio selection setup, but our approach potentially can be extended to more  general multi-dimensional singular control problems when the value function is concave. 
\end{abstract}

\section{Introduction}
In this paper, we are concerned about the following variational inequality from the long-term portfolio choice problem with two stocks and proportional transaction costs.
\begin{align}\label{equ PDE u}
& \min \bigg\{\theta- \mathcal{L} u + \mathcal{A}(u),
\quad  \min\limits_i \bigg\{u_{z_i}- (1-\mu_i)  \bigg\}, \quad  \min\limits_i \bigg\{ (1+\lambda_i) -u_{z_i} \bigg\}  \bigg\} = 0,\ \forall (z_1, z_2)\in \mathbb{R}^2,   
\end{align}
where 
\begin{align}
&\mathcal{L}u = \frac{1}{2} \bigg(\sigma_1^2 z_1^2 u_{z_1z_1} +\sigma_2^2 z_2^2 u_{z_2z_2} + 2\rho\sigma_1\sigma_2 z_1 z_2u_{z_1z_2}    \bigg) + \sum_{i = 1}^2 (\alpha_i -r) z_i u_{z_i}  \\
& \mathcal{A}(u) = \frac{\gamma}{2} \big(\sigma_1^2  z_1^2 u_{z_1}^2 + \sigma_2^2  z_2^2 u_{z_2}^2 + 2\rho\sigma_1\sigma_2 z_1 z_2u_{z_1}u_{z_2}  \big).
\end{align}
We assume the No-trading region is in the first quadrant, i.e., $z_1, z_2>0$, then we prove that the solution $u \in C^{2, \alpha}$ in the whole solvency region,  $\forall \alpha \in (0, 1)$.  

Generally speaking, the optimal regularity for gradient-constrained variational inequality solution is $W^{2, \infty}$, rather than $C^{2, \alpha}$, as we can see from the following example. 
\begin{example}
The function 
\begin{align} u(x)= 
\begin{cases}
   \frac{1}{2} x^2, & \text{ if }x\in [0, 1)\\
   x-\frac{1}{2}, & \text{ if } x\in [1, 2],
\end{cases}
\end{align}
is the unique $W^{2, \infty}$ solution to the variational inequalty 
\begin{align}
    \min\Big\{ -u_{xx}+ 1,\ 1-u_x     \Big\} = 0
\end{align}
with the boundary conditions 
\begin{align}
u(0) = 0, \quad u_x(2) = 1. 
\end{align}
\end{example}
The $W^{2, \infty}$ regularity for such gradient-constrained variational inequality is already established by Evans \cite{Evans1979}, Wiegner \cite{Wiegner1981}, and  
Hynd \cite{hynd2012eigenvalue, Hynd2013}.  

However, there are plenty of examples where solution is $C^{2, \alpha}$. First, in the one-dimensional economics and finance problems, researchers always use the super-contact condition to find the solution, which just match the second order derivatives at the free boundary (see Dumas \cite{Dumas1991}). Second, the literature in the proportional transaction cost problem with one risky asset already proves the solution to be $C^{2, \alpha}$, see (Dai et al. \cite{daisiam} and Dai and Yi \cite{dai2009finite}).  Lastly, Shreve and Soner \cite{ShreveSoner1991}, Soner and Shreve \cite{SonerShreve1989}, and Soner, Shreve, and El Karoui \cite{SonerShreveElKaroui1991} concerns special cases with very special structure of the second-order operator, and proves the $C^{2, \alpha}$ regularity.

\subsection{Main Proof Idea}
The main proof idea is from Dai et al. \cite{daisiam} and  Dai and Yi \cite{dai2009finite} on one-asset problem, where they transform globally the original gradient constrained problem for function $u$ to an obstacle problem for $u_z$. After that, a $W^{2, p}$, $1<p< \infty$ solution can be derived. By integrating in $z$, they can recover the classical solution. This idea was also introduced in Shreve and Soner \cite{ShreveSoner1991} and Soner and Shreve \cite{SonerShreve1989}. 

In the two-asset case, the main challenge for this technic is that the obstacle problem for one derivative $u_{z_i}$ will also involve derivatives in the other directions $u_{z_j}$, $j \neq i$. It results in a variational equation system for all partial derivatives if still focus on such global obstacle problems. 

In this paper, we resort to the following two-step approach to fix this issue. First, from the penalty method, we show that the original gradient-constrained problem permits a $W^{2, p}$, $1< p < \infty$ solution. 
Second,  we treat the first-order derivatives as known functions and 
look for the connection between the gradient-constrained problem and obstacle problem in a neighborhood of the free boundary point $P$. This step is trivial if the operator is simple, for example, the constant-coefficient second operator in Shreve and Soner \cite{ShreveSoner1991} since it can be proved that the obstacle problem value function has monotonicity and the free boundary is thus derived, then integrating the partial derivative recovers the function $u$.  
Generally, this monotonicity is not straightforward. In this paper, we construct an obstacle problem which is different in the second-order operator from the standard setup. Then establish the equivalence. 

With this equivalence established, we next prove the Lipschitz continuity of the free boundary, with which the $C^{2, \alpha}$ regularity is established locally. The corner points are finally handled based on the $C^{2, \alpha}$ of other points. 

This paper focus on the two-dimensional problem due to the following obstacles: 1. The current proof for the Lipschitz regularity of the free boundary cannot be extended to multi-dimensional problem. 2. The proof for the corner points only works in two dimensions.

\subsection{Related literature on transaction cost problem}

Merton (\cite{merton1969lifetime}, \cite{merton1975optimum}) pioneers in continuous time portfolio selection problems for investors.
To capture the trading fees or general asset illiquidity, 
Magill and Constantinides (\cite{magill1976portfolio}) later introduce proportional transaction costs to Merton’s model and show that a no-transaction region exists. The trading occurs only when the state variable hits the boundary of this region, i.e., the trading boundary. 

The majority of theoretical works focus on the case with only one risky asset. Among them, Shreve and Soner \cite{shreve1994optimal} considers the infinite horizon case and the value function only has one variable. The breakthrough about the regularity is made by  Dai and Yi \cite{dai2009finite}, where they consider the case with one risky asset and finite investment horizon, that is, there is an additional variable $t$ introduced. This technic is later extended to Dai et al. \cite{daisiam} where the consumption is taken into account. 
The analysis of the transaction cost problem is then further investigated by extensive literature, while most of them relies on the notion of viscosity solution of the HJB equation.  

The model we concerned in this paper is from \cite{guasoni2015long} and \cite{chen2022asymptotic}, which is an eigenvalue problem in the form of a variational inequality with gradient constraints.  \cite{guasoni2015long} analyzed the case with one risky asset or multiple uncorrelated risky assets, while \cite{chen2022asymptotic} considers the more general setup with correlated assets, they prove the solution to be $C^1$, and based on their result, we will lift the regularity to $C^{2, \alpha}$. 

The rest of the paper will be organized as follows. Section \ref{Sec model} introduces the model and the extant result derived by \cite{chen2022asymptotic}. Then Section \ref{sec Strong} establish the $W^{2}_p$, $1<p< \infty$ regularity. After that, we prove that the original problem can be reduced to an obstacle problem and the free boundaries are Lipschitz continuous in Section \ref {sec OP and FB}. Section \ref{sec classical} finally proves the solution to be $C^{2, \alpha}$. 
We conclude in Section \ref{sec Summary}.

\section{Model}\label{Sec model}
We consider a market with one risk-free asset with return rate $r \geq 0$, and two stocks with price dynamic
\begin{align}
d S^i_t/S^i_t  = \alpha_i dt + \sigma_i d B_t^i, i = 1, 2,
\end{align} 
with $\alpha_i >r>0$, $\langle d B_t^1, d B_t^2  \rangle = \rho dt$, $\rho \in (-1, 1)$. 

One investor does portfolio selection under proportional transaction cost, his account balance in cash (denoted as $X_t$) and stock $i$ (denoted as $Y^i_t$) evolves as 
\begin{align}
\begin{cases}
d X_t =  r X_t - \sum_{i = 1}^2 (1+\lambda_i) d L_t^i + \sum_{i = 1}^2 (1-\mu_i) d M_t^i\\
d Y^i_t = \alpha_i Y^i_t dt + \sigma_i Y^i_t d B^i_t + d L_t^i - d M_t^i, \quad i = 1, 2\,  
\end{cases}
\end{align}   
where $\mu_i$ and $\lambda_i$ with $\mu_i+\lambda_i>0$ are the transaction cost rates and $ L_t^i$ and $ M_t^i$ stand  for the cumulative purchase and sale amount of stock $i$, respectively.  

Define the wealth process as $$W_t := X_t + \sum_{i = 1}^2 (1-\mu_i) (Y^i_t)^+ -  \sum_{i = 1}^2 (1+\lambda_i) (Y^i_t)^-. $$  The investor concerns the following target
\begin{align}
\max \limits_{L^i_t, M^i_t} \limsup \limits_{T\to \infty} \frac{1}{T}\ln  E\bigg[ - \exp\Big(-\gamma \{W_T- (x+y_1 +y_2) e^{rT}\}\Big) \bigg| X_0 = x, Y_0 = y\bigg], 
\end{align}
for some $\gamma >0$. 

We see that (see \cite{chen2013characterization} and \cite{chen2022asymptotic}) the above control problem is related to the following HJB equation. 
\begin{align}\label{equ PDE u}
& \min \bigg\{\theta- \mathcal{L} u + \mathcal{A}(u),
\quad  \min\limits_i \bigg\{u_i- (1-\mu_i)  \bigg\}, \quad  \min\limits_i \bigg\{ (1+\lambda_i) -u_i \bigg\}  \bigg\} = 0,\ \forall (z_1, z_2) \in   \mathbb{R}^2 
\end{align}
where 
\begin{align}
&\mathcal{L}u = \frac{1}{2} \bigg(\sigma_1^2 z_1^2 u_{11} +\sigma_2^2 z_2^2 u_{22} + 2\rho\sigma_1\sigma_2 z_1 z_2u_{12}    \bigg) + \sum_{i = 1}^2 (\alpha_i -r) z_i u_{i}  \\
& \mathcal{A}(u) = \frac{\gamma}{2} \big(\sigma_1^2  z_1^2 u_1^2 + \sigma_2^2  z_2^2 u_2^2 + 2\rho\sigma_1\sigma_2 z_1 z_2u_1u_2  \big).
\end{align}
Denote $z = (z_1, z_2)$ and $\ell(z) =\sum_{i=1}^2 \ell_i\left(z_i\right)$, with  $$\quad \ell_i\left(z_i\right)= \begin{cases}\left(1-\mu_i\right) z_i & \text { if } z_i \geq 0 \\ \left(1+\lambda_i\right) z_i & \text { if } z_i<0\end{cases}.$$ 

\cite{chen2022asymptotic} has the following result.
\begin{proposition}\label{prop u MF}
    Problem \eqref{equ PDE u} has a unique viscosity solution $(\theta, u)$ with the growth condition $\lim _{|y| \rightarrow \infty} u(y) / \ell(y)=1$. Moreover, $u$ is concave, and $z_i u_{i} \in C(\mathbb{R}^2)$.
\end{proposition}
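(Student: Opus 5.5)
Proposition~\ref{prop u MF} is quoted from \cite{chen2022asymptotic}. Here is how I would re-derive it: first build a concave viscosity solution through the probabilistic representation, then get uniqueness from a comparison principle adapted to the eigenvalue structure.

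\textbf{Existence.} I would not attack the ergodic problem \eqref{equ PDE u} directly. Instead I would use the vanishing-discount (equivalently, finite-horizon) approximation. For $\delta>0$, let $V^\delta$ be the value function of the discounted control problem whose HJB equation is
\[
\min\{\delta V-\mathcal{L}V+\mathcal{A}(V),\ V_i-(1-\mu_i),\ (1+\lambda_i)-V_i\}=0 .
\]
The exponential utility makes the problem linear in the cash variable after the logarithmic transform. As a result, $V^\delta$ depends only on the dollar positions $z$, and its gradient constraints come directly from the buy/sell costs.

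Three properties of $V^\delta$ follow.
\begin{itemize}
\item Lipschitz bounds: $(1-\mu_i)\le V^\delta_i\le 1+\lambda_i$ in the viscosity sense, because trading is always admissible.
\item Concavity in $z$: the controlled dynamics are linear in $(z,L,M)$, the objective $-\exp(-\gamma\cdot)$ is concave, and the control set is convex, so mixing two strategies gives concavity.
\item Growth: $V^\delta(z)-\ell(z)$ is bounded uniformly in $\delta$ at large $|z|$. Liquidating immediately gives the lower bound $V^\delta\ge \ell+{}$const. Comparison with the frictionless Merton value function gives the upper bound.
\end{itemize}
Fix a point $z_0$ in the no-trading region. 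Set $\theta_\delta:=\delta V^\delta(z_0)$ and $u^\delta:=V^\delta-V^\delta(z_0)$. These are equi-Lipschitz, so Arzel\`a--Ascoli and stability of viscosity solutions give a subsequence with $\theta_\delta\to\theta$ and $u^\delta\to u$ locally uniformly. The limit $(\theta,u)$ solves \eqref{equ PDE u}, $u$ is concave, and the uniform bound gives $u/\ell\to1$.

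\textbf{Uniqueness.} I would use a strict-supersolution perturbation. Suppose $(\theta,u)$ and $(\theta',v)$ are two solutions with the growth condition, and assume $\theta<\theta'$. Take $v_\epsilon=(1-\epsilon)v+\epsilon\ell$ plus a small concave correction, so that it is a strict supersolution of the gradient constraints. Doubling variables and using the growth condition to localize the maximum of $u-v_\epsilon$ produces a contradiction, because the quadratic term $\mathcal{A}$ is handled by the convexity of $p\mapsto\mathcal{A}$ at the touching point. Exchanging roles gives $\theta=\theta'$.

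Then $w=u-v$ satisfies a linear degenerate inequality with no zeroth-order term. A strong maximum principle along the diffusion from the no-trade region, together with the gradient constraints in the trade regions, shows $w$ is constant. The constant is fixed by normalization, since the growth condition pins $u$ only up to a constant unless a normalization is included. This step, the ergodic comparison with a degenerate operator whose coefficients vanish on the axes and with no discount term, is the main obstacle. I would handle it by exploiting the Lyapunov-type growth of $\ell$ at infinity.

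\textbf{Continuity of $z_iu_i$.} Since $u$ is concave, $u_i$ exists almost everywhere and has one-sided limits. A kink can only occur where the gradient constraint is inactive in the viscosity sense. Away from $\{z_i=0\}$, the operator is nondegenerate in direction $i$, and concave viscosity solutions of uniformly elliptic equations are $C^1$, since a concave kink contradicts the supersolution property. On $\{z_i=0\}$, the factor $z_i$ kills the jump between $1-\mu_i$ and $1+\lambda_i$. This gives $z_iu_i\in C(\mathbb{R}^2)$.
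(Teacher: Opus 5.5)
The paper gives no proof of this proposition; it is imported from \cite{chen2022asymptotic}. Your sketch therefore has to stand on its own, and as written it has genuine gaps in each of its three parts.

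\textbf{Existence and growth.} Only $u^\delta-\ell$ can be bounded uniformly in $\delta$, since $V^\delta$ itself carries $V^\delta(z_0)\approx\theta/\delta$. Your two comparisons do not bound $u^\delta-\ell$ from above. The frictionless supersolution $z_1+z_2+c_\delta$ has slope $1$ rather than $1-\mu_i$ (as $z_i\to+\infty$) or $1+\lambda_i$ (as $z_i\to-\infty$). After normalisation it only gives $u^\delta-\ell\le C_\delta+\sum_i(\mu_i z_i^++\lambda_i z_i^-)$. This bound is linear in $|z|$, and its constant $C_\delta\approx(\theta_{\mathrm{Mer}}-\theta)/\delta$ generally blows up. The gradient box gives nothing better, so $u/\ell\to1$ does not follow. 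The missing step is that, for some $K$ independent of $\delta$, $u^\delta_i=1-\mu_i$ for $z_i\ge K$ and $u^\delta_i=1+\lambda_i$ for $z_i\le -K$. Proving this needs a real argument: for example, the quadratic bound $\mathcal{A}\ge c|z|^2$ (from $1-\mu_i>0$ and $|\rho|<1$) beats the linear drift and bounds $\mathbf{NN}$, and then the strips $\mathbf{NS},\mathbf{SN},\dots$, where only one constraint binds, must still be handled.

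Also, the vanishing-discount and finite-horizon routes are not interchangeable. The logarithmic transform of an exponential-utility problem yields the finite-horizon equation $-F_t-\mathcal{L}F+\mathcal{A}(F)=0$, as in Section~\ref{sec Strong}, and never a term $\delta V$. So your concavity-by-mixing argument does not apply to $V^\delta$. On the finite-horizon side, Arzel\`a--Ascoli alone does not produce an ergodic pair; you would need a large-time convergence theorem.

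\textbf{Uniqueness.} You leave uniqueness of $u$ modulo constants open yourself. Note also that for merely viscosity solutions, $u-v$ satisfies no linearised inequality. Beyond that, the $\theta$-comparison does not close as described.
\begin{itemize}
\item Convexity of $p\mapsto\mathcal{A}$ makes convex combinations of \emph{sub}solutions subsolutions. For a perturbed \emph{super}solution $v_\epsilon=(1-\epsilon)v+\epsilon w$ it only gives $\mathcal{A}(v_\epsilon)\ge\mathcal{A}(v)+\epsilon\,\nabla_p\mathcal{A}(\nabla v)\cdot\nabla(w-v)$. Because of the $z_i^2$ coefficients, the error term is of size $\epsilon|z|^2$ with no favourable sign in general.
\item Mixing with $\ell$ does nothing for strictness, since $\nabla\ell$ lies on the boundary of the box.
\item For any $w$ whose gradient is strictly inside the box, $u-v_\epsilon$ is coercive only at rate $\epsilon|z|$. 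Its maximum can then sit at $|z|\sim1/\epsilon$, where the $O(\epsilon|z|^2)$ error is not small.
\end{itemize}
The comparison must be localised using the far-field structure: $u$ and $v$ both equal $\ell$ plus a constant outside a compact set. That rests again on the missing growth step.

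\textbf{Continuity of $z_iu_i$.} A concave kink is invisible to the supersolution test, because no smooth function touches it from below. It can only violate the \emph{sub}solution test, and only when the superdifferential contains a point with both components strictly inside the box. Where the other constraint binds this never happens. For example, in $\mathbf{NS}$ we have $u=g(z_1)+(1-\mu_2)z_2$, so every test gradient has second component $1-\mu_2$ and the subsolution inequality holds trivially. A jump of $g'$ along a vertical half-line is then undetectable by any local test. Excluding it requires carrying information from $\mathbf{NN}$, where the equation is active, into the strip, and your sketch does not do this.
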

Define the no-trading region $\mathbf{N}_i$, buy region $\mathbf{B}_i$, and sell region $\mathbf{S}_i$ for risky asset $i= 1, 2$:
$$
\begin{gathered}
\mathbf{N}_i=\left\{\left(z_1, z_2\right) \mid 1-\mu_i<u_{z_i}<1+\lambda_i\right\}, \\
\mathbf{B}_i=\left\{\left(z_1, z_2\right) \mid u_{z_i}=1+\lambda_i\right\}, \quad \mathbf{S}_i=\left\{\left(z_1, z_2\right) \mid u_{z_i}=1-\mu_i\right\}
\end{gathered}
$$
and denote
$$
\begin{aligned}
& \mathbf{B B}=\mathbf{B}_1 \cap \mathbf{B}_2,\quad \mathbf{B N}=\mathbf{B}_1 \cap \mathbf{N}_2, \quad  \mathbf{B S}=\mathbf{B}_1 \cap \mathbf{S}_2, \\
& \mathbf{N B}=\mathbf{N}_1 \cap \mathbf{B}_2, \quad \mathbf{N N}=\mathbf{N}_1 \cap \mathbf{N}_2, \quad  \mathbf{N S}=\mathbf{N}_1 \cap \mathbf{S}_2, \\
& \mathbf{S B}=\mathbf{S}_1 \cap \mathbf{B}_2, \quad \mathbf{S N}=\mathbf{S}_1 \cap \mathbf{N}_2, \quad \mathbf{S S}=\mathbf{S}_1 \cap \mathbf{S}_2.
\end{aligned}
$$
Then from the concavity of $u$, there are bounded measurable functions $l_i^{ \pm}: \mathbb{R} \mapsto \mathbb{R}$ and intervals $\left[b_i^{ \pm}, s_i^{ \pm}\right]$ 
such that
$$
\begin{array}{ll}
\mathbf{S S}=\left[{s}_1^{+}, \infty\right) \times\left[{s}_2^{+}, \infty\right), & \mathbf{S N}=\left\{\left(z_1, z_2\right) \mid z_2 \in\left(b_2^{+}, s_2^{+}\right), z_1 \geqslant l_1^{+}\left(z_2\right)\right\}, \\
\mathbf{S B}=\left[s_1^{-}, \infty\right) \times\left(-\infty, b_2^{+}\right], & \mathbf{N B}=\left\{\left(z_1, z_2\right) \mid z_1 \in\left(b_1^{-}, s_1^{-}\right), z_2 \leqslant l_2^{-}\left(z_1\right)\right\}, \\
\mathbf{B B}=\left(-\infty, b_1^{-}\right] \times\left(-\infty, b_2^{-}\right], & \mathbf{B N}=\left\{\left(z_1, z_2\right) \mid z_2 \in\left(b_2^{-}, s_2^{-}\right), z_1 \leqslant l_1^{-}\left(z_2\right)\right\}, \\
\mathbf{B S}=\left(-\infty, b_1^{+}\right] \times\left[s_2^{-}, \infty\right), & \mathbf{N S}=\left\{\left(z_1, z_2\right) \mid z_1 \in\left(b_1^{+}, s_1^{+}\right), z_2 \geqslant l_2^{+}\left(z_1\right)\right\},
\end{array}
$$
and
$$
\mathbf{N} \mathbf{N}=\left\{\left(z_1, z_2\right) \mid l_1^{-}\left(z_2\right)<z_1<l_1^{+}\left(z_2\right), l_2^{-}\left(z_1\right)<z_2<l_2^{+}\left(z_1\right)\right\} .
$$
Moreover, the boundary of each corner region $\mathbf{S S}, \mathbf{S B}, \mathbf{B S}$, and $\mathbf{B B}$ consists of one vertical and one horizontal half-line, whereas the boundary of each of $\mathbf{S N}, \mathbf{N S}, \mathbf{B N}$, and $\mathbf{N B}$ consists of two parallel either vertical or horizontal half-lines and a curve in between connecting the endpoints of the two half-lines. We define the {\bf corner points} as $(b_1^-, b_2^-)$,  $(b_1^+, s_2^-)$, $(s_1^-, b_2^+)$, and  $(s_1^+, s_2^+)$.

In this paper, in order to circumvent 
the degeneracy of operators on $z_1 z_2 = 0$, 
we will focus on the problem in the first quadrant. That is, we make the following assumption.
\begin{assumption}
We assume 
   \begin{align}
    {\bf NN} \subset [1/M, M]^2
\end{align}
for some $M>1$.
\end{assumption}

\section{Theoretical Analysis: Strong solution}\label{sec Strong}
In this subsection, we start with the construction of a strong solution 
of the PDE problem. Then we verify that this solution coincides with the function $u$ we concern. 

Define 
\begin{align}
F^v(z_1, z_2, t) = \theta(T-t) + u(z_1, z_2).     
\end{align}
Then $F^v$ is a viscosity solution to the following PDE problem. 
\begin{numcases}{}
\min \bigg\{-F_t- \mathcal{L} F + \mathcal{A}(F),
\quad  \min\limits_i \bigg\{F_i- (1-\mu_i)  \bigg\},\notag \\
\qquad \quad  \min\limits_i \bigg\{ (1+\lambda_i) -F_i \bigg\}  \bigg\} = 0,\ \forall (z_1, z_2, t) \in   (1/M, M)^2 \times (0, T)\label{equ PDE F}\\
F(z_1, z_2, T) = u(z_1, z_2), \qquad  \forall (z_1, z_2) \in   [1/M, M]^2\label{equ PDE F bd1}\\
F_1(1/M, z_2, t) = 1+\lambda_1,\quad F_1(M, z_2, t) = 1-\mu_1, \quad \forall z_2 \in [1/M, M], t \in [0, T)\\
F_2(z_1, 1/M, t) = 1+\lambda_2,\quad  F_2(z_1, M, t) = 1-\mu_2,\quad  \forall z_1 \in [1/M, M], t \in [0, T).\label{equ PDE F bd3}
\end{numcases}

\begin{theorem}\label{thm F eta}
    The function $F\in \bigcap\limits_{0<t<T} W^{2, 1}_{p}([1/M, M]^2\times (0, t))$, $\forall 1<p< \infty$  
    to \eqref{equ PDE F}
    with the boundary and terminal conditions \eqref{equ PDE F bd1}-\eqref{equ PDE F bd3}. 
    That is, $u\in W^{2}_p([1/M, M]^2)$, $\forall 1<p<\infty$. 
    Moreover, we have 
    $u  \in C^{\infty} (\bf NN)$ with unifomly bounded second-order derivatives. 
\end{theorem}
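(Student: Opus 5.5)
The plan is to build $F$ by a penalty approximation on the bounded parabolic cylinder $Q_T=(1/M,M)^2\times(0,T)$, prove $W^{2,1}_p$ estimates that do not depend on the penalty parameter, pass to the limit, and then identify the limit with $F^v=\theta(T-t)+u$ through uniqueness of viscosity solutions.

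\textbf{Step 1: the penalized problem.} For $\varepsilon>0$ we consider
\begin{align*}
-F^\varepsilon_t-\mathcal{L}F^\varepsilon+\mathcal{A}(F^\varepsilon)
-\frac{1}{\varepsilon}\sum_{i=1}^2\Big[\beta\big((1-\mu_i)-F^\varepsilon_i\big)-\beta\big(F^\varepsilon_i-(1+\lambda_i)\big)\Big]=0 .
\end{align*}
Here $\beta$ is a smooth nondecreasing convex function with $\beta(s)=0$ for $s\le 0$. The terminal condition is \eqref{equ PDE F bd1}, and the Neumann-type conditions are \eqref{equ PDE F bd3}. On $[1/M,M]^2$ the operator $\mathcal{L}$ is uniformly parabolic with smooth coefficients, because $\sigma_i z_i\ge\sigma_i/M$ and $|\rho|<1$. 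The nonlinearity $\mathcal{A}$ is quadratic in the gradient.

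Existence of a classical solution follows from a fixed-point (Leray--Schauder) argument, once we have a priori bounds. These come in three layers:
\begin{compactenum}
\item a $C^0$ bound from comparison with $u\pm C(T-t)$;
\item a uniform gradient bound $1-\mu_i-C\varepsilon\le F^\varepsilon_i\le 1+\lambda_i+C\varepsilon$;
\item the resulting bound $\frac1\varepsilon\beta(\cdot)\le C$ on the penalty terms.
\end{compactenum}

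\textbf{Step 2: the gradient bound (main obstacle).} This is the key step. Differentiate the equation in $z_i$ to get an equation for $w=F^\varepsilon_i$. The cross term coming from $\mathcal{A}$ and the penalty in direction $j\neq i$ couple $w$ to $F^\varepsilon_j$. The terminal data $u_i$ already satisfy the constraints, and the boundary data for $w$ sit exactly at the constraint values.

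We then run a maximum principle argument on $\frac1\varepsilon\beta(w-(1+\lambda_i))$. At an interior maximum of $w$, the penalty term $\frac1\varepsilon\beta'(\cdot)w_i$ has the right sign. The coupling term involves $\beta'(\cdot)\partial_i F_j$, and it is the hard part. I would handle it using the concavity inherited from the data: first show that $F^\varepsilon$ stays concave, or at least that $F^\varepsilon_{12}$ has a controlled sign, by differentiating twice. If that fails, I would treat the coupling perturbatively and absorb it with a weight $e^{Kt}$. Either route yields $\|\frac1\varepsilon\beta\|_{L^\infty}\le C$ with $C$ independent of $\varepsilon$.

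\textbf{Step 3: uniform estimates and passage to the limit.} With the penalty terms bounded, the equation reads $-F_t-\mathcal{L}F=f$ with $f$ bounded uniformly in $\varepsilon$. Since $\mathcal{A}(F^\varepsilon)$ is bounded by the gradient bound, the $L^p$ theory for oblique/Neumann parabolic problems gives uniform $W^{2,1}_p$ bounds on $[1/M,M]^2\times(0,t)$ for every $t<T$ and every $p<\infty$. We stay away from $t=T$ because $u$ is only $C^1$ a priori, so compatibility at $t=T$ is not available.

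Extracting a subsequence, $F^\varepsilon\to F$ weakly in $W^{2,1}_p$ and in $C^{1+\alpha}$. The limit satisfies the variational inequality almost everywhere, and it is a viscosity solution by stability.

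\textbf{Step 4: identification with $u$.} A comparison principle for \eqref{equ PDE F}--\eqref{equ PDE F bd3} gives $F=F^v$. For this, we use that in $(1/M,M)^2$ outside $\mathbf{NN}$ the gradient constraints are active, and that the boundary conditions hold for $u$ by the structure of the trading regions under the Assumption. Hence $u=F(\cdot,t)-\theta(T-t)$ lies in $W^2_p([1/M,M]^2)$ for all $p$.

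\textbf{Step 5: smoothness in $\mathbf{NN}$.} Inside $\mathbf{NN}$ the constraints are strict, so $-\mathcal{L}u+\mathcal{A}(u)=-\theta$ is a uniformly elliptic semilinear equation with smooth coefficients. Bootstrapping Schauder estimates gives $u\in C^\infty(\mathbf{NN})$.

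For a uniform bound on $D^2u$, the $W^2_p$ bound already gives $C^{1,\alpha}$ up to the boundary. Concavity gives $u_{ii}\le0$, and the equation together with bounded $\mathcal{L}$-coefficients bounds the trace-type combination. Since the quadratic form with $|\rho|<1$ is positive definite, concavity plus a lower bound on $\sum a_{ij}u_{ij}$ bounds all second derivatives uniformly.
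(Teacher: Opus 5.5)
\textbf{Gap: the gradient bound in Step 2.} You correctly flag Step 2 as the crux, but none of the mechanisms you list produces the bound. With an unweighted penalty, differentiating in $z_i$ turns every penalty contribution into a first-order term in $w=F^\varepsilon_i$. The own-direction term is $\frac1\varepsilon\beta'(\cdot)\,\partial_i w$, and the coupling term is $\frac1\varepsilon\beta'(\cdot)\,\partial_iF^\varepsilon_j=\frac1\varepsilon\beta'(\cdot)\,\partial_j w$. Both vanish at an interior extremum of $w$. So the first does not ``have the right sign'' (it is zero there), and the second is harmless: no concavity or sign information on $F^\varepsilon_{12}$ is needed. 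What survives at a maximum of $w$ is only the zeroth-order part $(\alpha_i-r)w-\gamma\big(\sigma_i^2z_iw^2+\rho\sigma_1\sigma_2z_jF^\varepsilon_jw\big)$. This need not be negative at $w=1+\lambda_i$ (e.g.\ when $\alpha_i-r$ dominates $\gamma\sigma_i^2z_i(1+\lambda_i)$), so the maximum principle gives no contradiction. An $e^{Kt}$ weight only turns this into growth of order $e^{K(T-t)}$; it cannot give $w\le 1+\lambda_i+C\varepsilon$. You therefore have no $\varepsilon$-independent bound on the penalty, and Step 3 (hence $u\in W^2_p$) has nothing to stand on.

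\textbf{The missing idea.} The paper weights the penalties by positive functions of $z_i$, namely $(z_i+2M)\beta^\epsilon(1-\mu_i-F_i)+(2M-z_i)\beta^\epsilon(F_i-(1+\lambda_i))$. When $\partial_{z_i}$ falls on the weight, it produces the zeroth-order term $\beta^\epsilon(1-\mu_i-F_i)-\beta^\epsilon(F_i-(1+\lambda_i))$ in the equation for $F_i$. Take $\beta$ bounded, zero below $-1$, with $\beta(0)=c_\beta$ larger than the sup of the remaining zeroth-order terms. Then a first-crossing-time argument (backward in $t$) excludes interior violations. On the sides $z_i\in\{1/M,M\}$ the data sit on the constraint. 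On $z_j\in\{1/M,M\}$ one has $\partial_jF_i=\partial_iF_j=0$, and Hopf's lemma excludes a maximum there. This gives $1-\mu_i\le F^\epsilon_i\le 1+\lambda_i$ exactly, and the penalty is then bounded by $c_\beta$ for free.

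\textbf{Sign slip.} Separately, your buy penalty enters as $-\frac1\varepsilon\beta(F_i-(1+\lambda_i))$ on the right of $-F_t-\mathcal{L}F+\mathcal{A}(F)=\cdots$. The variational inequality forces $-F_t-\mathcal{L}F+\mathcal{A}(F)\ge 0$, so both penalties must enter with a plus sign. Otherwise $F^\varepsilon$ is not a supersolution, the scheme approximates a different problem on the buy side, and the comparison in Step 4 breaks.

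Apart from these points, your Steps 3--5 match the paper: uniform $W^{2,1}_p$ bounds once the penalty is bounded, identification by comparison, bootstrap in $\mathbf{NN}$, and the concavity-plus-ellipticity bound on $D^2u$.
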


\begin{corollary}\label{coro semiconti fb}
The free boundaries $l_i^+$, $i = 1, 2$ are lower semicontinuous and $l_i^-$, $i = 1, 2$ are upper semicontinuous. 
\end{corollary}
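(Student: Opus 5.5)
Since Theorem \ref{thm F eta} gives $u\in W^2_p([1/M,M]^2)$ for every $p<\infty$, Sobolev embedding yields $u\in C^{1,\beta}$ on $[1/M,M]^2$, so $u_{z_1}$ and $u_{z_2}$ are continuous there. Outside $[1/M,M]^2$ the point lies in a trading region, and Proposition \ref{prop u MF} gives continuity of $z_iu_i$, hence of $u_i$ away from the axes. The plan is to write each free boundary as an infimum or supremum of a family of continuous functions, or equivalently to show that its superlevel or sublevel sets are closed. Lower semicontinuity of $l_1^+$ means that $\{z_2: l_1^+(z_2)\le c\}$ is closed for every $c$, and analogous statements hold for the other three boundaries.

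Take $l_1^+$ on $(b_2^+,s_2^+)$. By concavity, $u_1$ is nonincreasing in $z_1$ for fixed $z_2$. The constraint gives $u_1\ge 1-\mu_1$, and the characterization of $\mathbf{SN}$ says $u_1(z_1,z_2)=1-\mu_1$ exactly when $z_1\ge l_1^+(z_2)$, because on $\mathbf{NN}$ and $\mathbf{BN}$ we have $u_1>1-\mu_1$. So
\[
l_1^+(z_2)=\min\{z_1:\ u_1(z_1,z_2)=1-\mu_1\},
\]
and the set is a closed half-line because $u_1$ is continuous and monotone in $z_1$. Now let $z_2^n\to z_2$ with $l_1^+(z_2^n)\le c$. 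Then $u_1(c,z_2^n)=1-\mu_1$ by monotonicity. Continuity of $u_1$ gives $u_1(c,z_2)=1-\mu_1$, so $l_1^+(z_2)\le c$. Hence $\liminf_n l_1^+(z_2^n)\ge l_1^+(z_2)$, which is lower semicontinuity.

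The case of $l_1^-$ is symmetric: $l_1^-(z_2)=\max\{z_1: u_1(z_1,z_2)=1+\lambda_1\}$. If $l_1^-(z_2^n)\ge c$, then $u_1(c,z_2^n)=1+\lambda_1$, and passing to the limit gives $l_1^-(z_2)\ge c$, so $l_1^-$ is upper semicontinuous. The boundaries $l_2^\pm$ follow by exchanging the roles of $z_1$ and $z_2$.

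The only delicate point is making the endpoints of the $z_2$-intervals and the set identities precise. One must check that the region description identifies $\{u_1=1-\mu_1\}$ on the horizontal line $\{z_2\}\times\mathbb{R}$, for $z_2\in(b_2^+,s_2^+)$, with $[l_1^+(z_2),\infty)$, even when the line crosses the $\mathbf{SS}$ or $\mathbf{SB}$ regions near the interval ends. This holds because the condition $u_1=1-\mu_1$ is simply the definition of $\mathbf{S}_1$, regardless of the second coordinate. Continuity of $u_1$ is needed only on $[1/M,M]^2$, since the Assumption places all free boundary points there. So the main obstacle is the continuity of $u_1$, and that is supplied by Theorem \ref{thm F eta} through embedding.
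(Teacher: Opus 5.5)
Your argument is correct and is essentially the paper's: both rely on the continuity of $\nabla u$ (from $u\in W^2_p$ via Sobolev embedding) and on the fact that $u_1=1-\mu_1$ on a horizontal line forces $z_1\ge l_1^+(z_2)$ by the definition of the sell region. The only difference is presentation: the paper passes to the limit along a subsequence realizing $\liminf_i l_1^+(w_i)$ using joint continuity of $u_1$, while you show the sublevel sets $\{l_1^+\le c\}$ are closed, using monotonicity of $u_1$ in $z_1$ at the fixed abscissa $c$; the two formulations are equivalent.
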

\begin{proof}[Proof of Corollary \ref{coro semiconti fb}]
   Without loss of generality, we prove $l_1^+$ to be lower semicontinuous.

   Choose any sequence $w_i\in [0, M]$, $i\geq 1$, which converges to $w_0 \in [1/M, M]$. We have that $u_1(l_1^+(w_i), w_i) = 1-\mu_1$. Therefore, from the continuity of $\nabla u$, we have  
   $$ u_1(\liminf_{i\to \infty} l_1^+(w_i), w_0) = 1-\mu_1.$$
   That implies $\liminf_{i\to \infty} l_1^+(w_i) \geq l_1^+(w_0)$ by definition. 
\end{proof}

\begin{proof}[\bf Proof of Theorem \ref{thm F eta}]

{\bf Step 1.} 
We first prove by the penalty method that the above PDE permits a solution in $\bigcap\limits_{0<t<T} W^{2, 1}_{p}([1/M, M]^2\times (0, t))$, $\forall 1<p< \infty$.

Define function $\beta^\epsilon(x) = \beta(\frac{x}{\epsilon})$ with function $\beta(x)$ chosen such that\\
1. It is globally smooth and increasing with an upper bound.\\
2. $\beta(x) = 0, \forall x \leq -1$, $\beta(0) = c_\beta$.\\ 
3. $\beta''(x)\geq 0$, $\forall x \leq 0$.

Consider the PDE operator
\begin{align}\label{equ PDE penal}
 F_t+ & \mathcal{L} F  -  \mathcal{A}(F)+ \sum_{i = 1}^2 (z_i+2M) \beta^\epsilon \bigg(1-\mu_i - F_i  \bigg) \notag \\
& + \sum_{i = 1}^2 (-z_i+2M) \beta^\epsilon \bigg( F_i -(1+\lambda_i ) \bigg) = 0, \quad \text{in $ (1/M, M)^2 \times (0, T)$}. 
\end{align}
\begin{lemma}\label{lem strong solu}
For any $0<\epsilon \leq \min \{\mu_1+\lambda_1, \mu_2+\lambda_2\}$, and $c_\beta$ independent of $\delta>0$ large enough,  there is a solution $F^{\epsilon}\in C^{\infty}( [1/M, M]^2 \times (0, T))$ to the above problem \eqref{equ PDE penal} with boundary and terminal conditions \eqref{equ PDE F bd1}-\eqref{equ PDE F bd3} 
such that   $$1-\mu_i\leq F^{\epsilon}_i\leq 1+\lambda_i.$$

\end{lemma}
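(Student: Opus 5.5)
The plan is to split existence of a smooth $F^\epsilon$ from the gradient bounds. The gradient bounds $1-\mu_i\le F^\epsilon_i\le 1+\lambda_i$ are the heart of the matter, because they are what later give compactness and the limit $\epsilon\to0$.

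\textbf{Existence.} Equation \eqref{equ PDE penal} is a uniformly parabolic quasilinear equation on $(1/M,M)^2\times(0,T)$. The diffusion matrix $\frac12(\sigma_i\sigma_j\rho_{ij}z_iz_j)$ is uniformly elliptic there since $|\rho|<1$ and $z_i\ge 1/M$. The nonlinearities $\mathcal A(F)$ and $\beta^\epsilon$ depend on $\nabla F$ smoothly, and $\beta$ is bounded. We use the Leray--Schauder fixed point theorem in $C^{1+\alpha,(1+\alpha)/2}$: freeze the gradient in the nonlinear terms, solve the linear oblique/Neumann problem with the boundary conditions \eqref{equ PDE F bd1}--\eqref{equ PDE F bd3}, and use Schauder and $W^{2,1}_p$ estimates. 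The map is compact, and we need a priori bounds on the fixed-point family (parameter $\tau\in[0,1]$ multiplying the nonlinearities). If $\nabla F$ is bounded, the equation becomes linear with bounded coefficients, and Hölder bounds on $F_t,\nabla^2F$ follow by bootstrapping. Interior smoothness then comes from differentiating the equation repeatedly. Because the terminal datum $u$ is only $C^1$ (Proposition \ref{prop u MF}), smoothness can hold only for $t<T$, and we would mollify $u$ if needed. So everything reduces to a uniform gradient bound, which the second part supplies. A uniform sup bound on $F$ follows from the maximum principle applied to $F-\theta(T-t)-u$-type comparison functions, since $\beta^\epsilon$ is bounded.

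\textbf{Gradient bounds.} This step is the main obstacle. We differentiate \eqref{equ PDE penal} in $z_1$ and set $w=F_1$. This gives a linear parabolic equation for $w$,
\begin{align*}
w_t+\mathcal L w + (\text{first-order terms in }\nabla w) + c\,w\,\text{-type terms} - (z_1+2M)\beta^{\epsilon\prime}(1-\mu_1-w)w_1/\cdots + \beta^\epsilon(1-\mu_1-w)-\beta^\epsilon(w-1-\lambda_1)=0,
\end{align*}
where the term $\beta^\epsilon(1-\mu_1-w)-\beta^\epsilon(w-(1+\lambda_1))$ comes from differentiating the weights $\pm z_1+2M$. We apply the maximum principle to $w$ at a hypothetical interior point where $w$ attains a maximum larger than $1+\lambda_1$. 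There $\beta^\epsilon(w-1-\lambda_1)\ge \beta^\epsilon(0^+)\ge c_\beta$ and $\beta^\epsilon(1-\mu_1-w)=0$, because $\epsilon\le \mu_1+\lambda_1$. At such a point $\nabla w=0$ and $w_t+\mathcal Lw\le 0$. The remaining zero-order contributions come from differentiating $\mathcal L$ and $\mathcal A$ in $z_1$, namely $\sigma_1^2z_1F_{11}$, $\rho\sigma_1\sigma_2z_2F_{12}$, $(\alpha_1-r)F_1$, and $\gamma$-terms in $F_1,F_2$. The terms with $F_{11},F_{12}$ are first derivatives of $w$ and vanish at the extremum. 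The others are bounded in terms of $w$ and $F_2$. These must be dominated by $-c_\beta$, which is exactly why $c_\beta$ must be chosen large, independent of the parameters; this is the role of ``$c_\beta$ large enough''.

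To avoid circularity with $F_2$, we run the argument on $\max_i (F_i-(1+\lambda_i))^+$ and $\max_i(1-\mu_i-F_i)^+$ simultaneously, or first within the $\tau$-family, where the bounds are obtained continuously from $\tau=0$. The minimum case $w<1-\mu_1$ is symmetric. On the lateral boundary the values of $w$ are prescribed and lie in the admissible range. At $t=T$ the datum $u_1\in[1-\mu_1,1+\lambda_1]$ by Proposition \ref{prop u MF}. On the faces $z_2=1/M,M$, where only $F_2$ is prescribed, a boundary maximum of $F_1$ is excluded by Hopf's lemma combined with the same sign computation, or by differentiating the boundary condition tangentially. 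This gives the two-sided bounds, which close the a priori estimate and complete the proof.
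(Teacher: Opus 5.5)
Your gradient bound follows the paper's own argument. You differentiate \eqref{equ PDE penal} in $z_1$ and note that differentiating the weights $\pm z_1+2M$ produces the zero-order term $\beta^\epsilon(1-\mu_1-w)-\beta^\epsilon(w-(1+\lambda_1))$. You then use $\epsilon\le\mu_1+\lambda_1$ to switch off the opposite penalty, and $c_\beta$ large to rule out an extremum outside $[1-\mu_1,1+\lambda_1]$. The faces $z_1=1/M,M$ are handled by the Dirichlet data. The faces $z_2=1/M,M$ are handled by Hopf's lemma against $\partial_2F_1=0$, the tangentially differentiated Neumann condition; these are two halves of one step rather than alternatives. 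Controlling both components and both bounds at a single first exit time is actually necessary. The paper's upper-bound step replaces $-\gamma\rho\sigma_1\sigma_2z_2HG$ by $\gamma\sigma_1\sigma_2M(1+\lambda_1+\delta)(1+\lambda_2+\delta)$, which tacitly uses a lower bound on $H$ that it only proves afterwards.

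The point that does not close as written is the Leray--Schauder wrapper. Suppose $\tau$ multiplies all the nonlinear terms, penalty included, as ``multiplying the nonlinearities'' suggests. Then the zero-order part of the differentiated $\tau$-equation is $(\alpha_1-r)w-\tau\gamma w(\sigma_1^2z_1w+\rho\sigma_1\sigma_2z_2F_2)+\tau\big[\beta^\epsilon(1-\mu_1-w)-\beta^\epsilon(w-(1+\lambda_1))\big]$, where $(\alpha_1-r)w$ comes from $\mathcal{L}$ and is not scaled. For $\tau c_\beta<(\alpha_1-r)(1+\lambda_1)$ the maximum-principle contradiction disappears. At $\tau=0$ the bounds fail in general, because $w$ then solves a backward linear equation with positive zero-order coefficient and can grow like $e^{(\alpha_1-r)(T-t)}$ above its data. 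So the bounds are not ``obtained continuously from $\tau=0$''. Your second part therefore does not deliver the $\tau$-uniform estimate Leray--Schauder needs, and no other one is immediate, since the $\gamma$-terms are quadratic in $\nabla F$.

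The repair is easy: keep the penalty at full strength and let $\tau$ scale only $\mathcal{A}$. Then at a first exit time every $F_j$ lies in $[1-\mu_j-\delta,1+\lambda_j+\delta]$, and the $\gamma$-terms are bounded uniformly in $\tau\in[0,1]$. One choice of $c_\beta$ then works for all $\tau$. At $\tau=0$ the remaining nonlinearity is bounded, so the fixed-point map has bounded range and the degree argument starts. Alternatively, do what the paper does: take a local-in-time solution and continue it in $t$, so the maximum principle is only ever applied to the full penalized equation.
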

From Lemma \ref{lem strong solu}, 
we take $\epsilon \to 0$, we derive from  $F^{\epsilon}_i \in [1-\mu_i, 1+\lambda_i]$ that $\beta^\epsilon \bigg(1-\mu_i - F_i  \bigg), \beta^\epsilon \bigg( F_i -(1+\lambda_i ) \bigg)$ are uniformly bounded by $c_\beta$. Thus $F^{\epsilon}$ is uniformly bounded in $W^{2, 1}_{p}([1/M, M]^2\times (0, t))$, $\forall 1<p<\infty$, we derive a weak limit $F$ by sending $\epsilon \to 0$, which is the solution we want.

\begin{proof}[\bf Proof of Lemma \ref{lem strong solu}]

From the Schauder fixed-point theorem, for any fixed $\epsilon$, there is a local-in-time solution $F^\epsilon \in C^{\infty}( [1/M, M]^2 \times (t_\epsilon, T))$. 

We then only need to show that the local existence can be extended globally. That is, we will 
show  $F^\epsilon_i \in [1-\mu_i, 1+\lambda_i]$, $i = 1, 2$ in  $[1/M, M]^2 \times (t_\epsilon, T).$ Then standard arguments show that this solution can be extended globally. 

By taking $\partial_1$ in \eqref{equ PDE penal}, we see $G: = F^\epsilon_1$ satisfies the following operator in $(1/M, M)^2 \times (t_\epsilon, T)$,  
\begin{align}
0 = & G_t + \mathcal{L} G   + \sigma_1^2 z_1 G_1 + \rho \sigma_1\sigma_2 z_2 G_2 + (\alpha_1-r) G \notag \\
& -  \gamma \Big( \sigma_1^2 (z_1 G^2 + z_1^2 G G_1 )  + \sigma_2^2 z_2^2 F^\epsilon_2 G_2 + \rho \sigma_1 \sigma_2 z_2 (F^\epsilon_2 G + z_1 F^\epsilon_2 G_1 + z_1 G G_2 ) \Big)  \notag\\
& + \beta^\epsilon \bigg(1-\mu_1 - G \bigg) -  \beta^\epsilon \bigg( G - (1+\lambda_1) \bigg) \notag\\
& - \sum_{i = 1}^2 (z_i+2M) (\beta^\epsilon)' \bigg(1-\mu_i - F^\epsilon_i  \bigg)  G_i +\sum_{i = 1}^2 (2 M-z_i) (\beta^\epsilon)' \bigg(F^\epsilon_i  - (1+\lambda_i) \bigg) G_i\label{equ hat L}
\end{align}
with the boundary and terminal conditions  
\begin{numcases}{}
G(z_1, z_2, T) =u_1(z_1, z_2), &$ \forall (z_1, z_2) \in   [1/M, M]^2$ \label{equ bd G1}\\
G(1/M, z_2, t) = 1+\lambda, \quad G(M, z_2, t) = 1-\mu, & $\forall z_2 \in [1/M, M], t \in (t_\epsilon, T)$\label{equ bd G2} \\
G_2(z_1, 1/M, t) =  G_2(z_1, M, t) = 0 & $\forall z_1 \in [1/M, M], t \in (t_\epsilon, T)$.\label{equ bd G3}
\end{numcases}
Notice that \eqref{equ hat L} can be written into
\begin{align}
0 = & G_t + \frac{1}{2}\Big(\sigma_1^2 z_1^2 G_{11} + 2 \rho \sigma_1 \sigma_2 z_1 z_2 G_{12} + \sigma_2^2 z_2^2 G_{22} \Big) + \phi(z_1, z_2, t) G_1 + \varphi(z_1,  z_2, t) G_2 + (\alpha_1-r) G\\
 & \quad - \gamma \Big(\sigma_1^2 z_1 G^2 + \rho \sigma_1 \sigma_2 z_2 F^\epsilon_2 G\Big) + \beta^\epsilon \bigg(1-\mu_1 - G \bigg) -  \beta^\epsilon \bigg( G - (1+\lambda_1) \bigg),\label{equ G penal}
\end{align}
where $\phi$ and $\varphi$ are some smooth functions. 

Similarly, define $H = F^\epsilon_2$, 
we see that in  $(1/M, M)^2 \times (t_\epsilon, T)$, 
\begin{align*}
0 = & H_t + \frac{1}{2}\Big(\sigma_1^2 z_1^2 H_{11} + 2 \rho \sigma_1 \sigma_2 z_1 z_2 H_{12} + \sigma_2^2 z_2^2 H_{22} \Big) + \hat \phi(z_1, z_2, t) H_1 + \hat \varphi(z_1,  z_2, t) H_2 + (\alpha_2-r) H\\
 & \quad - \gamma \Big(\sigma_2^2 z_2 H^2 + \rho \sigma_1 \sigma_2 z_1 F^\epsilon_1 H\Big) + \beta^\epsilon \bigg(1-\mu_2 - H \bigg) -  \beta^\epsilon \bigg( H - (1+\lambda_2) \bigg),
\end{align*}
for some smooth function $\hat \phi$ and $\hat \varphi$. 

{\bf We first prove $ G \leq 1+\lambda_1$ and $H \leq 1+\lambda_2$ in $(t_\epsilon, T]$ by contradiction.} 
Choose any $\delta>0$ and define $$t_\delta = \sup  \bigg\{t<T \bigg| \max \Big\{ \sup\limits_{(z_1, z_2)\in [1/M, M]^2} {G}(z_1, z_2, t)- (1+\lambda_1), \sup\limits_{(z_1, z_2)\in [1/M, M]^2} {H}(z_1, z_2, t)-(1+\lambda_2) \Big\} \geq \delta \bigg\}.  $$
By definition, we shall have 
\begin{align}\label{equ com tdelta tep}
    t_\delta \in [ t_\epsilon, T) \text{ for any } \delta>0.
\end{align}

Without loss of generality, we assume 
\begin{align}\label{equ tilde G inter}
G(\tilde z_1, \tilde z_2, t_\delta) = 1+\lambda_1+\delta
\end{align}
for some $(\tilde z_1, \tilde z_2) \in [1/M, M]^2$.
There are two possible scenarios. 

1. $t_\delta< T$ and $1/M< z_i < M $, $i = 1, 2$. 

We derive from \eqref{equ G penal} that at $(\tilde z_1,\tilde z_2, t_\delta)$
\begin{align*}
0 \leq & (\alpha_1-r) (1+\lambda_1+\delta) - \gamma \Big(\sigma_1^2 z_1 G^2 + \rho \sigma_1 \sigma_2 z_2 H G\Big) + \beta^\epsilon \bigg(1-\mu_1 - G \bigg) -  \beta^\epsilon \bigg( G - (1+\lambda_1) \bigg)\\
\leq & (\alpha_1-r) (1+\lambda_1+\delta)+\gamma |\rho| \sigma_1 \sigma_2 z_2 H G + \beta^\epsilon \bigg(-\mu_1 - \lambda_1 -\delta \bigg) - \beta^\epsilon (\delta)\\
\leq &(\alpha_1-r) (1+\lambda_1+\delta)+ \gamma \sigma_1 \sigma_2 M (1+\lambda_1+\delta)(1+\lambda_2+\delta) - c_\beta\\
< & 0,
\end{align*}
when $\delta$ is small enough and 
\begin{align}\label{equ cbeta con}
c_\beta> (\alpha_1-r) (1+\lambda_1)+ \gamma \sigma_1 \sigma_2 M (1+\lambda_1)(1+\lambda_2).
\end{align}
That is a contradiction. 

2.  $t_\delta< T$ and $z_i = M$, or $1/M$, for $i = 1$ or $2$.

If $z_1 = 1/M$ or $M$, we see derive a contradiction from the boundary conditions \eqref{equ bd G2}. 
Then we consider $z_2 = 1/M$ or $M$.  
Without loss of generality, we assume $z_2 = M$, and $G(z_1, z_2, t)>  (1+\lambda_1)$ in a neighborhood of $(\tilde z_1,\tilde z_2, t_\delta)$. Therefore, in this neighborhood, from \eqref{equ G penal}, we see
\begin{align}
 &  G_t + \frac{1}{2}\Big(\sigma_1 \sigma_2 ^2 z_1^2 G_{11} + 2 \rho \sigma_1 \sigma_2 z_1 z_2  G_{12} + \sigma_2^2 z_2^2 G_{22} \Big) + \phi(z_1, z_2, t) G_1 + \varphi(z_1,  z_2, t) G_2 \\
 = &  - (\alpha_1-r) G + \gamma \Big(\sigma_1^2 z_1 G^2 + \rho \sigma_1 \sigma_2 z_2 H G\Big) - \beta^\epsilon \bigg(1-\mu_1 - G \bigg) + \beta^\epsilon \bigg( G - (1+\lambda_1) \bigg)\\
 \geq &- (\alpha_1-r) (1+\lambda_1+\delta)-\gamma \sigma_1 \sigma_2 M (1+\lambda_1+\delta)(1+\lambda_2+\delta) + c_\beta \\
 > & 0,
\end{align}
when $\delta$ is small enough and \eqref{equ cbeta con} is satisfied. 
From the Hopf's lemma, $G_2(\tilde z_1,\tilde z_2, t_\delta) >0$. That is a contradiction to the boundary condition \eqref{equ bd G3}.  

In summary, we have $t_\delta \leq t_\epsilon$. Combined with \eqref{equ com tdelta tep}, we see  $t_\delta = t_\epsilon$ for any $\delta>0$ sufficiently small. That is, $F^\epsilon_i \leq 1+\lambda_i$, $i = 1, 2$ in $[t_\epsilon, T]$.  

{\bf Similarly, we can show that $F^\epsilon_i \geq 1-\mu_i$, $i = 1, 2$ in $[t_\epsilon, T]$. } That finishes our proof.

\end{proof}
\vspace{16pt}

{\bf Step 2.} We denote this limit function as $F^s$, and prove that it  coincides with the viscosity solution $F^v$.

To show the equivalence, on the one hand, we notice that from Lemma \ref{lem strong solu}, the function $F^{\epsilon}$ is a classical supersolution to \eqref{equ PDE F}
with the boundary and terminal conditions \eqref{equ PDE F bd1}-\eqref{equ PDE F bd3}.  From the comparison principle of viscosity solution, we see $F^{s}(z_1, z_2, t)= \lim \limits_{\epsilon \to 0}F^{\epsilon}(z_1, z_2, t) \geq F^v(z_1, z_2, t)$ for any $(z_1, z_2, t)\in [1/M, M]^2 \times (0, T]$.

On the other hand, we shall prove that $F^s$ is a viscosity subsolution to \eqref{equ PDE F}
with the boundary and terminal conditions \eqref{equ PDE F bd1}-\eqref{equ PDE F bd3}. Then again from the comparison principle, we see $F^s \leq F^v$, and that finishes the proof. 

Consider smooth solution $\phi$, such that $\phi-F^s$  attains local minimum $0$ at some point $(\hat{z}_1, \hat{z}_2, \hat{t})$ in $(1/M, M)^2 \times (0, T)$. If $F_i^s (\hat{y}_1, \hat{y}_2, \hat{t}) = 1-\mu_i$ or $1+\lambda_i$, then $\phi_i = 1-\mu_i$ or $1+\lambda_i$, and 
$$
\min \bigg\{-\phi_t- \mathcal{L} \phi + \mathcal{A}(\phi),
\quad  \min\limits_i \bigg\{\phi_i- (1-\mu_i)  \bigg\}, \quad  \min\limits_i \bigg\{ (1+\lambda_i) -\phi_i \bigg\}  \bigg\} \leq 0.
$$
Otherwise, suppose $F_i^s (\hat{y}_1, \hat{y}_2, \hat{t}) \in (1-\mu_i, 1+\lambda_i)$, $\forall i = 1, 2$. Due to the Sobolev embedding theorem, 
there is a ball centered at $(\hat{z}_1, \hat{z}_2, \hat{t})$ with radius  $\eta> 0$, such that $F^\epsilon_i  \in (1-\mu_i-\zeta, 1+\lambda_i-\zeta)$, $\forall i = 1, 2$ for some $\zeta>0$ in this ball when $\epsilon>0$ is small enough. Thus,  
$-F^\epsilon_t- \mathcal{L} F^\epsilon + \mathcal{A}(F^\epsilon) = 0$ in this ball for any $\epsilon$ small enough. Therefore, when taking limit, $F^s = \lim \limits_{\epsilon \to 0} F^\epsilon$ is smooth and satisfies  $-F^s_t- \mathcal{L} F^s + \mathcal{A}(F^s) = 0$ in this ball, which implies $-\phi_t- \mathcal{L} \phi + \mathcal{A}(\phi) \leq 0$ in this ball.

From the $W^{2, 1}_{p}$ estimate on the boundary $z_i = 1/M$ and $M$, we see that the boundary conditions are also satisfied in the classical sense. That verifies the subsolution property.  
\vspace{10pt}

{\bf Step 3:} We prove $u \in C^\infty(\bf NN)$ and $\|\nabla^2 u\|$ is uniformly bounded in {\bf NN}. 

In the $\bf NN$ region, from the standard bootstrap method, $u$ is $C^\infty$. 

We see from Step 2 that $\nabla u = \nabla_z F^v = \nabla_z F^s$
is uniformly bounded. 
Therefore, from 
\begin{align}
    \theta - \mathcal{L} u + \mathcal{A} u = 0,\text{ in }  {\bf NN}, 
\end{align}
we see 
\begin{align}\label{equ second L}
|\sigma_1^2 z_1^2 u_{11} +\sigma_2^2 z_2^2 u_{22} + 2\rho\sigma_1\sigma_2 z_1 z_2u_{12}|\leq C  
\end{align}
for some constant $C>0$. From the concavity of $u$, we see that $|\sigma_1\sigma_2 z_1 z_2 u_{12}| \leq \sigma_1\sigma_2 z_1 z_2 \sqrt{u_{11}u_{22}}\leq \frac{1}{2} (\sigma_1^2 z_1^2 |u_{11}| + \sigma_2^2 z_2^2 |u_{22}|)$. Therefore, 
the left hand side of \eqref{equ second L} is no less than 
\begin{align}
\eta (|u_{11}|+ |u_{22}|) = -\eta (u_{11}+ u_{22})
\end{align}
for some small $\eta>0$ which only relies on $M$, $\sigma_i$, $i = 1, 2$, and $\rho \in (0, 1)$. 
That implies both $u_{11}$ and $u_{22}$ are uniformly bounded, and which implies  $|u_{12}|\leq \sqrt{u_{11}u_{22}} $ is also bounded.

\end{proof}

\section{Theoretical Analysis: Preliminary results for Further analysis}\label{sec OP and FB}

In this section, we shall first consider non-corner points on the free boundary and show that it is equivalent to an obstacle problem. After that, we will show that the free boundaries are Lipschitz.

\subsection{Regularity at non-corner points}

In this subsection, we assume this point $P: = (P_1, P_2) = (l^+_1(z_2), z_2)$ is not on the corner of the no-action region while on the sell boundary of stock 1.

Precisely, we shall require $u_1 = 1-\mu_1$, and $u_2 \in (1-\mu_2, 1+\lambda_2)$ at this point $P$. 
From the continuity of derivative $u_2$ due to Theorem \ref{thm F eta}, there is a neighborhood $R:= P+ \big\{(-\epsilon_1, \epsilon_1) \times (- \epsilon_2, \epsilon_2)\big\}$ of $P$ where $u_1 < 1+\lambda_1$ and $u_2\in (1-\mu_2, 1+\lambda_2)$ in this neighborhood. 

\begin{proposition}\label{prop transf}
For sufficiently small $\epsilon_1, \epsilon_2>0$, the function 
$u \in W^{2}_p(R)$ satisfies 
\begin{align}
\min \bigg\{\theta- \mathcal{L} u + \mathcal{A}(u),
\quad  u_1- (1-\mu_1)  \bigg\}= 0,\ \forall (z_1, z_2) \in   R.   
\end{align}  
Moreover, we can ensure $u_1> 1-\mu$ when on the left boundary $z_1 = P_1-\epsilon_1$. 
\end{proposition}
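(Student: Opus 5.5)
The plan has two parts. First I reduce the full variational inequality \eqref{equ PDE u} to the one-sided problem inside $R$. Then I choose $\epsilon_1$ and $\epsilon_2$, in that order, to get the boundary condition on the left edge.

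\emph{Reduction of the variational inequality.} By Theorem \ref{thm F eta}, $u\in W^2_p([1/M,M]^2)$ for every $1<p<\infty$, so in particular $u\in W^2_p(R)$. The construction of $R$ gives $u_1<1+\lambda_1$ and $u_2\in(1-\mu_2,1+\lambda_2)$ in $R$. Hence the three constraint terms $(1+\lambda_1)-u_1$, $u_2-(1-\mu_2)$ and $(1+\lambda_2)-u_2$ are strictly positive throughout $R$.

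To make the reduction rigorous I need the full inequality \eqref{equ PDE u} in a pointwise sense, not only in the viscosity sense. I will use the following fact: a $W^2_p$ function with $p$ large is twice (Peano) differentiable almost everywhere. At any such point $z^0$, a quadratic polynomial perturbed by $\pm\delta|z-z^0|^2$ touches $u$ from above and from below. Applying the viscosity sub- and supersolution inequalities from Proposition \ref{prop u MF} and letting $\delta\to0$ shows that \eqref{equ PDE u} holds at $z^0$ with the pointwise derivatives.

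Alternatively, I could pass to the limit in the penalized problem of Step 1. The penalty terms there are nonnegative and vanish where the constraints are strict. This gives $\theta-\mathcal{L}u+\mathcal{A}(u)\ge0$ a.e., with equality a.e.\ on $\{1-\mu_i<u_i<1+\lambda_i,\ i=1,2\}$; the stationary form of $F$ gives the same statement for $u$.

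Either way, because three of the five terms in the minimum are positive in $R$, the minimum equals $\min\{\theta-\mathcal{L}u+\mathcal{A}(u),\,u_1-(1-\mu_1)\}$. This yields the stated equation a.e.\ in $R$ (and in the viscosity sense). I expect this step, passing from the viscosity formulation to an a.e.\ statement for a $W^2_p$ function, to be the only real technical point. The rest is bookkeeping.

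\emph{Boundary condition on the left edge.} Here I use the definition of $l_1^+$ together with the description of $\mathbf{SN}$ below Proposition \ref{prop u MF}. Since $u$ is concave, $z_1\mapsto u_1(z_1,P_2)$ is nonincreasing, and $P_1=l_1^+(P_2)$ is the left endpoint of the set $\{u_1(\cdot,P_2)=1-\mu_1\}$. Therefore $u_1(z_1,P_2)>1-\mu_1$ for every $z_1<P_1$.

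Fix any $\epsilon_1>0$ small enough that the original rectangle conditions hold. Then $u_1(P_1-\epsilon_1,P_2)>1-\mu_1$. Since $\nabla u$ is continuous (Theorem \ref{thm F eta} and Sobolev embedding), there is $\epsilon_2>0$ such that $u_1(P_1-\epsilon_1,z_2)>1-\mu_1$ for all $|z_2-P_2|<\epsilon_2$. Shrinking $\epsilon_2$ preserves the earlier properties of $R$: $u_1<1+\lambda_1$, $u_2\in(1-\mu_2,1+\lambda_2)$, and the equation from the first part. This gives the last claim, $u_1>1-\mu_1$ on $z_1=P_1-\epsilon_1$.
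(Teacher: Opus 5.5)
Your proposal is correct and follows the paper's proof. The paper also treats the reduction to $\min\{\theta-\mathcal{L}u+\mathcal{A}(u),\,u_1-(1-\mu_1)\}=0$ as immediate from the choice of $R$, and it gets the left-edge condition exactly as you do: $u_1>1-\mu_1$ at $P+(-\epsilon_1,0)$ by the definition of $l_1^+$, then $\epsilon_2$ is shrunk using the continuity of $u_1$. Your extra argument that the inequality holds pointwise a.e.\ for the $W^2_p$ function $u$ (via a.e.\ twice differentiability, or via the penalized limit) fills in a step the paper leaves implicit.
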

\begin{proof}[\bf Proof of Proposition \ref{prop transf}]
We only need to make sure $u_1> 1-\mu_1$ on the left boundary. This is from the definition that at $P+(-\epsilon_1, 0)$, we have $u_1 > 1-\mu_1$, while $u_1$ is continuous, and we can make $\epsilon_2$ to be small enough to make sure on the left boundary $u_1> 1-\mu_1$. 
\end{proof}

We denote $z_L = P_2 -\epsilon_1$, $z_H = P_2 -\epsilon_1$, $Z_L = P_1-\epsilon_2$ and $Z_H = P_1+\epsilon_2$. Then the region concerned is 
$R= (Z_L, Z_H)\times(z_L, z_H)$. 

Define $G = u_1$, then by direct differentiation, we see that on the one hand
\begin{align}\label{equ G ope NN}
    \mathcal{L}_2 G + B = 0 \text{ and } G > 1-\mu_1, \text{ in the no-trading region ${\bf NN} \cap R$.}
\end{align}
where 
$$\mathcal{L}_2 G = \frac{1}{2} \Big( \sigma_1^2 z_1^2 {G}_{11} + z_2^2 \sigma_2^2 {G}_{22} + 2 \rho \sigma_1\sigma_2z_1 z_2 {G}_{12} \Big)+ \sigma_1^2z_1 G_1 + \rho \sigma_1 \sigma_2 z_2 G_2$$
is a linear operator and the function
\begin{align}
    B(z_1, z_2) = & (\alpha_1-r) z_1 G_1+ (\alpha_2-r) z_2 G_2+  (\alpha_1-r) G \\
    & -  \gamma \Big( \sigma_1^2 (z_1 G^2 + z_1^2 G G_1 )  + \sigma_2^2 z_2^2 u_2 G_2 + \rho \sigma_1 \sigma_2 z_2 (u_2 G + z_1 u_2 G_1 + z_1 G G_2 ) \Big)\bigg|_{(z_1, z_2)}
\end{align}
is bounded in ${\bf NN} \cap R$. 

On the other hand, we have 
\begin{align}
    G = u_1 = 1-\mu_1 \text{ in ${\bf SN} \cap R$.}
\end{align}
Moreover, since $G = u_1$ is constant in ${\bf SN} \cap R$, we see that $\mathcal{L}_2 G = 0$. Thus, we expect that $G$ satisfies the following variational inequality. 
\begin{align}
\begin{cases}\label{equ G opera}
    \min \{-\mathcal{L}_2 \tilde{G} - \tilde B,\quad  \tilde {G}-(1-\mu_1)\} = 0, \quad 
    &\forall (z_1, z_2) \in R\\
     \tilde {G} = u_1, \quad & \forall (z_1, z_2) \in \partial R,
\end{cases}
\end{align}
where 
\begin{align}
\tilde{B}(z_1, z_2) = 
\begin{cases}
{B}(z_1, z_2), & \text{ if $z_1 < l_1^+(z_2)$}, \\
0, & \text{ if $z_1 \geq l_1^+(z_2)$}.
\end{cases}
\end{align}
\begin{lemma}\label{lem u1 strong}
The problem \eqref{equ G opera} permits a unique solution in $W^{2}_p(R)$, $\forall 1<p<\infty$,  which is $u_1$.  
\end{lemma}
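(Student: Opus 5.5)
The plan is to treat \eqref{equ G opera} as a linear obstacle problem with a \emph{given} right-hand side. Here $u$, and hence $u_2$ and $u_1$ inside $B$, are regarded as known functions supplied by Theorem \ref{thm F eta}. I will then establish three things separately: existence of a $W^2_p$ solution $\tilde G$, its uniqueness, and the identification $\tilde G=u_1$.

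\textbf{Existence and uniqueness.} Since $R\subset[1/M,M]^2$ (up to shrinking $\epsilon_1,\epsilon_2$) and $|\rho|<1$, the operator $\mathcal{L}_2$ is uniformly elliptic on $\bar R$ with smooth coefficients. The source $\tilde B$ is bounded and measurable: in ${\bf NN}\cap R$ it is bounded by Theorem \ref{thm F eta}, and it vanishes on ${\bf SN}\cap R$. The boundary data $u_1$ is in $W^{1}_p$ for all $p$, hence H\"older continuous, and on the right part of $\partial R$ it equals the obstacle. I will solve the penalized problem
\[
-\mathcal{L}_2 G^\epsilon - \tilde B = \beta^\epsilon\bigl(1-\mu_1-G^\epsilon\bigr)
\]
exactly as in Lemma \ref{lem strong solu}. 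The maximum principle gives $0\le \beta^\epsilon(\cdot)\le C$, so the $L^p$ estimates give a uniform $W^2_p$ bound, and a weak limit produces a solution $\tilde G\in W^2_p(R)$.

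If the boundary data are not regular enough for global $W^2_p$ estimates, I will only claim $W^2_{p,\mathrm{loc}}$ up to the flat sides, which is all that is used later. Uniqueness follows from the Aleksandrov--Bakelman--Pucci maximum principle for strong solutions. If $\tilde G^1,\tilde G^2$ are two solutions, then on the open set $\{\tilde G^1>\tilde G^2\}$ we have $\tilde G^1>1-\mu_1$, so $-\mathcal{L}_2\tilde G^1=\tilde B$ there while $-\mathcal{L}_2\tilde G^2\ge\tilde B$. Hence $\tilde G^1-\tilde G^2$ is a strong subsolution vanishing on the boundary of that set, which forces the set to be empty.

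\textbf{Identification with $u_1$.} We know $u\in W^2_p$, so $u_1\in W^1_p$ only, and the task is to show that $u_1$ satisfies \eqref{equ G opera} in the distributional sense and then upgrade it. Set $g:=\theta-\mathcal{L}u+\mathcal{A}(u)\in L^p(R)$. By Proposition \ref{prop transf}, $g=0$ in ${\bf NN}\cap R$ and $g\ge0$ a.e. In ${\bf SN}\cap R=\{z_1\ge l_1^+(z_2)\}$ we have $u=(1-\mu_1)z_1+k(z_2)$, so $g$ is explicit there and smooth in $z_1$. Differentiating $g$ in $z_1$ in the sense of distributions gives, for every nonnegative test function $\varphi$,
\[
\int_R u_1\,\mathcal{L}_2^*\varphi+\tilde B\varphi \;=\; -\int_R g\,\partial_1\varphi \;+\;\int_{{\bf SN}\cap R}(\text{explicit terms})\,\varphi .
\]
The only possible discrepancy from $-\mathcal{L}_2u_1=\tilde B$ (off the contact set) is therefore a singular measure carried by the graph $z_1=l_1^+(z_2)$, with density equal to the one-sided trace of $g$ from the ${\bf SN}$ side. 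This trace is $\ge0$, so $u_1$ is automatically a distributional supersolution with $u_1\ge 1-\mu_1$. Comparing with $\tilde G$ (a minimal-supersolution argument) then gives $u_1\ge\tilde G$.

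\textbf{Main obstacle.} The step I expect to be hardest is showing that this singular part vanishes, i.e.\ that the trace of $g$ on the free boundary from the ${\bf SN}$ side is zero, which yields the reverse inequality. My first attempt will use that $u_1$ attains its minimum $1-\mu_1$ on all of ${\bf SN}\cap R$. Suppose a nonnegative singular measure $-\mathcal{L}_2u_1\ge c\,\mathcal{H}^1\llcorner\{z_1=l_1^+(z_2)\}$ sat on a portion of the free boundary. Then, since $\tilde B$ is bounded, a Hopf/barrier comparison from the ${\bf NN}$ side would force $u_1-(1-\mu_1)\ge c'\,\mathrm{dist}$ there, with a positive jump of $\partial_1u_1$ across the boundary. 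That should contradict the continuity of the trace of $g$ inherited from $u\in W^2_p$ with $p>2$, equivalently the Lipschitz-type control of $\nabla u$.

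If this direct argument fails, the fallback is to integrate $\tilde G$ in $z_1$ from the left side $z_1=Z_L$, where $u_1>1-\mu_1$ by Proposition \ref{prop transf}. This defines
\[
\tilde U(z_1,z_2)=u(Z_L,z_2)+\int_{Z_L}^{z_1}\tilde G(s,z_2)\,ds .
\]
I would then check that $\tilde U$ is a viscosity solution of the localized gradient-constrained problem of Proposition \ref{prop transf} with the same boundary data, and invoke the comparison principle for that problem to conclude $\tilde U=u$, hence $u_1=\tilde G\in W^2_p(R)$.
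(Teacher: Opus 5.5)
\textbf{The gap is the inequality $u_1\le\tilde G$.} You label it the main obstacle and try to obtain it from vanishing of the singular part, and neither of your arguments for it works.

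The first attempt fails because a nonnegative measure $g^+(z_2)\,dz_2$ on the graph of $l_1^+$ only means that $u_{11}$ jumps across the free boundary. Such a jump is fully compatible with $u\in W^2_p$ for every $p$, and even with $W^2_\infty$ and a Lipschitz $\nabla u$. Meanwhile $g$ is merely $L^p$, so there is no continuity of its trace to contradict. The example in the introduction is exactly a $W^2_\infty$ solution of a gradient-constrained inequality whose second derivative jumps at the free boundary. In fact, vanishing of the singular part \emph{is} the conclusion of the lemma (the $C^2$ fit across $l_1^+$), so it cannot be extracted from a priori regularity.

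The fallback is circular as stated. Since $\tilde B$ is frozen at $\nabla u$, integrating $\tilde G$ produces a $\tilde U$ whose second-order part balances the lower-order terms evaluated at $\nabla u$ (the paper's $f$), not at $\nabla\tilde U$. Verifying the nonlinear HJB for $\tilde U$ therefore presupposes $\nabla\tilde U=\nabla u$. Moreover, at $z_1=Z_L$ one only knows $\tilde U_{11}=\tilde G_1\ge u_{11}$, so even for the frozen linear equation $\tilde U$ is only a subsolution. That is precisely the paper's Step 3: Lemma \ref{lem hatu sub}, with $f$ extended constantly in $z_1$ beyond $l_1^+$, compared by ABP with the supersolution $u$ of Lemma \ref{lem u sup}.

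\textbf{The missing observation} is that $u_1\le\tilde G$ is the easy direction, namely the paper's Step 2. On ${\bf NN}\cap R$, $u_1$ is a classical solution of $-\mathcal L_2u_1=\tilde B$ and $\tilde G$ is a strong supersolution. We have $\tilde G=u_1$ on $\partial R$ and $\tilde G\ge 1-\mu_1=u_1$ on the free boundary, so ABP gives $\tilde G\ge u_1$ there; on ${\bf SN}\cap R$ the inequality is trivial.

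With this, your own argument closes the proof. Your supersolution claim is correct, although your displayed identity has the sign of the $g$-term reversed. In $\mathcal D'(R)$ one has $-\mathcal L_2u_1-\tilde B=\partial_1 g+\mathbf 1_{\bf SN}B$, with $B$ given by its formula in $u$. On a.e.\ ${\bf SN}$ slice, $g$ is smooth in $z_1$ with $\partial_1 g=-B$, which leaves exactly the measure $g^+(z_2)\ge 0$ on the graph. Note that $g^+\ge0$ is essentially what Lemma \ref{lem u sup} establishes.

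On the open set $\{\tilde G>u_1\}$ one has $\tilde G>1-\mu_1$. Hence $\tilde G-u_1\in W^1_p\cap C(\bar R)$ is a weak subsolution there (write $\mathcal L_2$ in divergence form, with no zeroth-order term) and vanishes on the boundary of that set. The weak maximum principle then gives $\tilde G\le u_1$.

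Completed this way, your route is a legitimate alternative to the paper's Step 3. It works at the level of $u_1$, with measure data and pure Dirichlet boundary values. It avoids both the integration trick and the mixed Dirichlet--Neumann comparison for $\hat u$. It also yields vanishing of the singular part as a consequence rather than requiring it as input.
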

\begin{proof}[\bf Proof of Lemma \ref{lem u1 strong}]

{\bf Step 1}: 
We first show the existence of a $W^2_p(R)$ solution to \eqref{equ G opera}.  

The proof is analogous to Step 1 of Theorem \ref{thm F eta}. The critical condition for this proof is the boundedness of $\tilde B$. We denote this solution as $\hat G$. 

\vspace{16pt}
{\bf Step 2}: We prove $\hat G \geq G$ in $R$.

On the one hand, in ${\bf SN} \cap R$, we have by definition $\hat G \geq 1-\mu_1 =  G$. On the other hand, in ${\bf NN} \cap R$, we have that $G$ and $\hat G$ are respectively classical solution and strong solution to $-\mathcal{L}_2 \tilde G -\tilde B = 0$. From the Aleksandrov-Bakelman-Pucci estimate (see \cite{gilbarg1998elliptic}), we see that $\tilde G \geq G$ also holds in ${\bf NN} \cap R$. 
    
\vspace{16pt}

{\bf Step 3}: We show that $\hat G = G$ in $R$. 

Define 
\begin{align}
\hat{u}(z_1, z_2) = u(Z_L, z_2) + \int_{Z_L}^{z_1} \hat{G}(v, z_2) dv,\ \forall \ (z_1, z_2) \in R  
\end{align} 
and 
\begin{align}
f(z_1, z_2) = 
    \begin{cases}
       ( (\alpha_1-r )z_1 u_1 + (\alpha_2-r )z_2 u_2 ) - \mathcal{A}(u), & \text{ if $z_1 \leq l_1^+(z_2)$}\\
       f(l_1^+(z_2), z_2), & \text{ if $z_1 > l_1^+(z_2)$}. \label{equ f defi}
    \end{cases}
\end{align}
We have the following lemma. 
\begin{lemma}\label{lem hatu sub}
  The function  $\hat u\in W^{2}_p(R)$, $1<p< \infty$, satisfies 
\begin{align}\label{equ pde hat u}
 \theta - \frac{1}{2}(\sigma_1^2z_1^2 \hat u_{11} +\sigma_2^2 z_2^2  \hat u_{22} + 2\rho \sigma_1\sigma_2z_1 z_2 \hat u_{12} ) + f \leq 0, \text{ a.e. in } R
\end{align}
with the boundary conditions
\begin{align}\label{equ bd hat u}
\begin{cases}
\hat u = u, & \text{if } z_1 = Z_L, \text{or } z_2 = z_L, \text{or }  z_2 = z_H, \\
\hat u_1 = u_1,  & \text{if } z_1 = Z_H.
\end{cases}  
\end{align}
\end{lemma}
\begin{proof}[\bf Proof of Lemma \ref{lem hatu sub}]
The regularity and the boundary conditions can be directly derived from the definition. We will prove \eqref{equ pde hat u} in what follows. 

On the one hand, when $\hat G > 1-\mu_1$, we see from the variational inequality that $\mathcal{L}_2 \hat G + \tilde B = 0$. 
On the other hand, when $\hat G = 1-\mu$, from $\hat G \geq G \geq 1-\mu$ we have $G = 1-\mu_1$, and thus 
$$\tilde B = 0, \text{ in $\{(z_1, z_2)\in R| \hat G = 1-\mu_1\}$}.
$$ 
and also $\mathcal {L}_2 \hat G = 0$ a.e. in $\{(z_1, z_2)\in R| \hat G = 1-\mu_1\}$ since $\hat G$ is a constant there. 

In summary, we have 
\begin{align}\label{equ a.e. operator hatG}
    \mathcal{L}_2 \hat G + \tilde B = 0, a.e. \text{ in } R.
\end{align}

We immediately see from $\hat G\geq G$ in step 2 that
\begin{align}\label{equ hat F>F}
\hat  u \geq u\text{ in }R. 
\end{align}
and therefore 
\begin{align}\label{equ Fyy yl}
 \hat u_{11} = \hat{G}_1 \geq {G}_1 = u_{11} \text{ on the left boundary } \{(Z_L, z_2)| z_2 \in [z_L, z_H]\}. 
\end{align} 
From Proposition \ref{prop transf}, we see that $\hat l^+_1(z_2) > Z_L +\delta$, $\forall z_2\in [z_L, z_H]$ for some uniform $\delta>0$. 
On the left boundary $\{(Z_L, z_2)| z_2 \in [z_L, z_H]\}$, we also have  
\begin{align}
&\hat{u}_{12} = \hat {G}_{2} = G_2= {u}_{12},\quad \hat{u}_{22} = {u}_{22}, \\
& \hat{u}_1 = \hat G = G = {u}_1, \quad \hat{u}_2 = {u}_2. \label{equ deriva left}
\end{align}
Equations \eqref{equ a.e. operator hatG} and \eqref{equ Fyy yl}-\eqref{equ deriva left} generate  that 
\begin{align}
&- \theta + \frac{1}{2}(\sigma_1^2 z_1^2 \hat u_{11} + \sigma_2^2 z_2^2  \hat u_{22} + 2\rho \sigma_1\sigma_2z_1 z_2 \hat u_{12} ) + f \Big|_{(z_1, z_2)} \\
    = & - \theta +\frac{1}{2}(\sigma_1^2 z_1^2 \hat u_{11} + \sigma_2^2 z_2^2  \hat u_{22} + 2\rho \sigma_1\sigma_2z_1 z_2 \hat u_{12} ) +f  \Big|_{(Z_L, z_2)} \\
    &\qquad + \int_{Z_L}^{z_1} \partial_1\bigg( \frac{1}{2}(z_1^2 \hat u_{11} + z_2^2  \hat u_{22} + 2\rho z_1 z_2 \hat u_{12} ) +f\bigg) (v, z_2)  dv             \\
= & - \theta +\frac{1}{2}(\sigma_1^2 z_1^2 \hat u_{11} + \sigma_2^2 z_2^2  \hat u_{22} + 2\rho \sigma_1\sigma_2z_1 z_2 \hat u_{12} ) +f  \Big|_{(Z_L, z_2)} + \int_{Z_L}^{z_1} ( \mathcal{L}_2 \hat G+ \tilde B)(v, z_2)  dv \\
    = & - \theta +\frac{1}{2}(\sigma_1^2 z_1^2 \hat u_{11} + \sigma_2^2 z_2^2  \hat u_{22} + 2\rho \sigma_1\sigma_2z_1 z_2 \hat u_{12} ) +f\Big|_{(Z_L, z_2)} \\
\geq & - \theta +\frac{1}{2}(\sigma_1^2 z_1^2  u_{11} + \sigma_2^2 z_2^2   u_{22} + 2\rho \sigma_1\sigma_2z_1 z_2  u_{12} ) +f\Big|_{(Z_L, z_2)} \\
    = & 0 \quad a.e. \text{ in $R$}.
\end{align}
\end{proof}

\begin{lemma}\label{lem u sup}
  The function  $u\in W^{2}_p(R)$, $1<p< \infty$, satisfies 
\begin{align}\label{equ pde u modi}
 \theta - \frac{1}{2}(\sigma_1^2z_1^2  u_{11} +\sigma_2^2 z_2^2  u_{22} + 2\rho \sigma_1\sigma_2z_1 z_2 u_{12} ) + f \geq 0, \text{ a.e. in } R
\end{align}
with the boundary conditions
\eqref{equ bd hat u}.
\end{lemma}

\begin{proof}
From the variational inequality we have $u\in W^{2}_p(R)$ satisfies 
\begin{align}
 \theta - \frac{1}{2}(\sigma_1^2z_1^2  u_{11} +\sigma_2^2 z_2^2  u_{22} + 2\rho \sigma_1\sigma_2z_1 z_2 u_{12} ) + f = 0, \text{ if } z_1 < l_1^+(z_2).
\end{align}
We shall focus on the case $z_1 > l_1^+(z_2)$ in what follows.  
From Lebesgue's differentiation theorem, we have for almost every $z_0 \in (1/M, M)$,
\begin{align}
    \lim \limits_{\epsilon \to 0} \frac{1}{2\epsilon} \int_{B_\epsilon(z_0)} l_1^+(z) dz = l_1^+(z_0),
\end{align}
where $B_\epsilon(z_0)$ is the ball centered at $z_0$ with radius $\epsilon$. 
From Corollary \ref{coro semiconti fb}, we have 
\begin{align}
    \liminf_{\epsilon \to 0} \inf\limits_{0< |z-z_0| <\epsilon} l_1^+(z) = l_1^+(z_0),\ a.e. \text{ in $[1/M, M]$}.
\end{align}
Then we see from $u_1 = 1-\mu_1$ that for almost every $(z_1, z_2)$ in $R\cap \{(z_1, z_2) | z_1 > l_1^+(z_2) \}$, there is a sequence $w_i \downarrow l^+_1(z_2)$, such that 
\begin{align}
    u_{22}(z_1, z_2) = \lim \limits_{i\to \infty} u_{22}(w_i, z_2). 
\end{align}
Therefore, noticing that $u_{11} = u_{12} = 0$, 
we have for almost every $(z_1, z_2) \in R\cap \{(z_1, z_2) | z_1 > l_1^+(z_2) \} $,
\begin{align}
 & \theta - \frac{1}{2}(\sigma_1^2 z_1^2 u_{11} + \sigma_2^2 z_2^2   u_{22} + 2\rho \sigma_1\sigma_2z_1 z_2 u_{12} )|_{(z_1, z_2)} \\
 = & \lim \limits_{i \to \infty } \Big( \theta - \frac{1}{2}(\sigma_1^2 w_i^2 u_{11} + \sigma_2^2 z_2^2   u_{22} + 2\rho \sigma_1\sigma_2w_i z_2 u_{12} )\Big)|_{(w_i, z_2)}\\
 \geq & \lim \limits_{i\to \infty} \Big(-\big((\alpha_1-r) w_i u_1 + (\alpha_2-r) z_2 u_2 \big) + \mathcal{A} u\Big)|_{(w_i, z_2)} \label{equ limit vari u}\\
 = & - f(l_1^+(z_2), z_2) \label{equ limit f}\\
 = & -f(z_1, z_2),\label{equ limit f 2}
\end{align}
where the inequality \eqref{equ limit vari u} is from the variational inequality \eqref{equ PDE u} , \eqref{equ limit f} is from the continuity of $\nabla u$, and the equation \eqref{equ limit f 2} is from the definition \eqref{equ f defi} of function $f$.

In summary, we have $u\in W^{2}_p(R)$ satisfies 
\begin{align}
 \theta - \frac{1}{2}(\sigma_1^2 z_1^2 u_{11} + \sigma_2^2 z_2^2   u_{22} + 2\rho \sigma_1\sigma_2 z_1 z_2 u_{12} ) + f \geq 0, \text{ a.e. in $R$}.
\end{align}
\end{proof}
From Lemma \ref{lem hatu sub} and \ref{lem u sup}, we have from the Aleksandrov-Bakelman-Pucci estimate (see \cite{gilbarg1998elliptic}) that $u\geq \hat {u}$. Noticing that by definition of $\hat {u}$ and $\hat G \geq G$ derived in Step 2, we have $\hat u\geq u$, then we shall see $u = \hat u$, which also implies $\hat G = \hat u_1 = u_1 = G$. 
\end{proof}

Similar arguments also work for non-corner points on other free boundaries. In summary, Lemma \ref{lem u1 strong} can be extended to the following proposition.
\begin{proposition}\label{Prop F_i regu}
For $i, j = 1, 2$, $i \neq j$,  the partial derivative $u_i$ is in $W^{2}_p(\mathcal{K})$ for any compact subset $\mathcal{K}$ of $(1/M, M)^2\setminus \{(z_1, z_2) | F_j = 1-\mu_j \text{ or } 1+\lambda_j\}$.  
\end{proposition}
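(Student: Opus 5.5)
The plan is to reduce the statement to a finite covering argument. Each local piece will be handled either by Theorem \ref{thm F eta} or by (a symmetric version of) Lemma \ref{lem u1 strong}. Fix $i\neq j$ and a compact set $\mathcal{K}\subset(1/M,M)^2$ on which $u_j\in(1-\mu_j,1+\lambda_j)$. Since $\nabla u$ is continuous by Theorem \ref{thm F eta}, $u_j$ stays in a compact subinterval of $(1-\mu_j,1+\lambda_j)$ on a slightly larger compact neighborhood $\mathcal{K}'$ of $\mathcal{K}$. It then suffices to show that every $Q\in\mathcal{K}$ has an open rectangle $R_Q\ni Q$ with $u_i\in W^2_p(R_Q)$. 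Compactness gives a finite subcover, and local $W^2_p$ bounds on finitely many overlapping rectangles patch to a $W^2_p(\mathcal{K})$ bound.

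For a point $Q$ I would split into three cases according to the value of $u_i(Q)$.

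\begin{enumerate}
\item If $u_i(Q)\in(1-\mu_i,1+\lambda_i)$, then $Q$ lies in the open set $\mathbf{NN}$. There $u\in C^\infty$ by Theorem \ref{thm F eta}, so $u_i$ is smooth near $Q$.
\item If $u_i(Q)=1-\mu_i$, take $i=1$ for definiteness. Set $P=(l_1^+(Q_2),Q_2)$; this point is a non-corner point of the sell boundary of stock $1$, because $u_2$ is interior near $\mathcal{K}$. I would then redo Proposition \ref{prop transf} and Lemma \ref{lem u1 strong} on a rectangle $R=(Z_L,Z_H)\times(z_L,z_H)$ with the following properties. It contains both $P$ and $Q$. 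Its left edge lies in $\{u_1>1-\mu_1\}$, which is possible by continuity of $u_1$ and the fact that $u_1>1-\mu_1$ just left of $P$. It stays inside $\mathcal{K}'$, and $u_1<1+\lambda_1$ on it.

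The proof of Lemma \ref{lem u1 strong} never used that $R$ is small around the free boundary point. It used only two facts: $u_1>1-\mu_1$ on the left edge, and $u_2$ interior on $R$, so that only the constraint $u_1\ge 1-\mu_1$ is active and the ABP comparisons of Lemmas \ref{lem hatu sub}--\ref{lem u sup} go through. Hence $u_1$ coincides with the $W^2_p(R)$ solution of the obstacle problem \eqref{equ G opera}.
\item If $u_i(Q)=1+\lambda_i$, the argument is the mirror image. One uses the obstacle problem $\min\{\mathcal{L}_2\tilde G+\tilde B,\ (1+\lambda_1)-\tilde G\}=0$. In this case one integrates $\hat u$ from the right edge $Z_H$, which is chosen in $\{u_1<1+\lambda_1\}$, and all inequalities in Step 2 and Lemmas \ref{lem hatu sub}--\ref{lem u sup} are reversed.
\end{enumerate}

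The case $i=2$ follows by exchanging the roles of $z_1$ and $z_2$. This gives the operator analogous to $\mathcal{L}_2$, namely with first-order terms $\sigma_2^2z_2H_2+\rho\sigma_1\sigma_2z_1H_1$, together with the corresponding bounded $B$.

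The main obstacle is checking that the mirrored buy-side and $i=2$ versions of Lemmas \ref{lem hatu sub} and \ref{lem u sup} really hold with the reversed orientation. Two points need care. First, the sign of the comparison $\hat u_{11}$ versus $u_{11}$ on the edge from which one integrates must again yield the correct one-sided inequality. Second, the extension of $f$ by its value on the free boundary ($l_1^-$, which is upper semicontinuous by Corollary \ref{coro semiconti fb}) must still give the a.e. inequality through the Lebesgue-point argument. I would check the buy case carefully by writing $\hat u(z_1,z_2)=u(Z_H,z_2)-\int_{z_1}^{Z_H}\hat G\,dv$ and tracking signs. The boundedness of $\tilde B$, which is needed for Step 1, holds uniformly on $\mathcal{K}'$ by Theorem \ref{thm F eta}.
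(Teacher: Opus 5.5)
Your route is the paper's. The paper proves this proposition only by remarking that the argument of Lemma~\ref{lem u1 strong} carries over to non-corner points of the other free boundaries, and your covering argument with the sell/buy and $i=1,2$ mirror images fleshes that out. Letting $R$ reach back from $Q$ to $P=(l_1^+(Q_2),Q_2)$ is the right design. At this stage $l_1^+$ is only lower semicontinuous, since the Lipschitz property is proved later \emph{using} this proposition. So $l_1^+$ may jump down at $Q_2$, and a point $Q$ with $Q_1>l_1^+(Q_2)$ can then still be a limit of points of $\mathbf{NN}$. For such $Q$, no small rectangle around $Q$ has its left edge in $\{u_1>1-\mu_1\}$. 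But exactly in this situation two of your steps fail: the claim that $P$ is non-corner ``because $u_2$ is interior near $\mathcal{K}$'', and the requirement $R\subset\mathcal{K}'$. The distance $Q_1-P_1$ is not controlled by the width of $\mathcal{K}'$, so $P$ need not be near $\mathcal{K}$. As written, nothing stops $u_2$ from reaching $1-\mu_2$ or $1+\lambda_2$ on the segment from $P$ to $Q$, and that would break the reduction to a one-constraint obstacle problem on $R$.

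The missing fact is that $u_2$ is constant along horizontal segments of the sell set. Suppose $l_1^+(z_2)\le a<b$. Since $u_1\ge 1-\mu_1$ everywhere, the map $y\mapsto u(b,y)-u(a,y)=\int_a^b u_1(s,y)\,ds$ satisfies $u(b,y)-u(a,y)\ge(1-\mu_1)(b-a)$ for all $y$, with equality at $y=z_2$. Differentiating at this minimum gives $u_2(b,z_2)=u_2(a,z_2)$.

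Hence $u_2\equiv u_2(Q)$ on $[P_1,Q_1]\times\{Q_2\}$, so $P$ is non-corner. By continuity of $\nabla u$ and monotonicity of $u_1$ in $z_1$, a thin rectangle around $[P_1-\epsilon,Q_1+\epsilon]\times\{Q_2\}$ then has:
its left edge in $\{u_1>1-\mu_1\}$; $u_1<1+\lambda_1$ throughout; and $u_2\in(1-\mu_2,1+\lambda_2)$ throughout.
It lies in $(1/M,M)^2$, because the points just left of $P$ belong to $\mathbf{NN}\subset[1/M,M]^2$. That is all the proof of Lemma~\ref{lem u1 strong} uses, so simply drop the requirement $R\subset\mathcal{K}'$. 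The buy side is identical with $l_1^-$.

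On the sign bookkeeping you flagged: on the buy side only $\hat G\le G$ flips. Since $\hat u-u=\int_{z_1}^{Z_H}(G-\hat G)\,dv\ge 0$ and $\partial_1(\hat G-G)\ge 0$ on $\{z_1=Z_H\}$, you again get $\hat u\ge u$, and $\hat u_{11}\ge u_{11}$ on the edge you integrate from. So Lemmas~\ref{lem hatu sub} and~\ref{lem u sup} hold with the same inequalities, not reversed ones.
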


\subsection{Lipschitz continuity of free boundary}\label{sec Lipschitz}
In this subsection, we will prove that the free boundaries $l^\pm_i$, $i = 1, 2$,  are Lipschitz continuous. To show that, we only need to prove that  $|u_{12}| < -C u_{11}$, $|u_{12}| < -C u_{22}$ in $\bf NN$ for some constant $C> 0$. 

Without loss of generality, we consider $C u_{11}\pm u_{12}$, and will prove it to be negative. 
In $\bf NN$, $G = u_1$ satisfies 
$0 = \mathcal L_2 G + B$ where $\mathcal L_2$ and $B$ are defined in \eqref{equ G ope NN}. 
Take a transform $z_1 = e^w$, $z_2 = e^v$, then this PDE turns into 
\begin{align}
0=&\frac12\Big(\sigma_1^2G_{ww}
+\sigma_2^2G_{vv}
+2\rho\sigma_1\sigma_2G_{wv}\Big)\\
&+\left(\alpha_1-r+ \frac12\sigma_1^2\right)G_w
+ \left(\alpha_2-r+\rho\sigma_1\sigma_2-\frac12\sigma_2^2\right)G_v +(\alpha_1-r)G \\
&-\gamma
\Big[
\sigma_1^2 e^w\left(G^2+GG_w\right)
+\sigma_2^2 u_v G_v
+\rho\sigma_1\sigma_2
\left(
u_v G
+u_v G_w
+e^w GG_v
\right)
\Big].
\end{align}
Therefore, define $H = G_w$, and we derive by taking $\partial_w$ on both sides that 
\begin{align}
0={}&
\frac12\Big(
\sigma_1^2H_{ww}
+\sigma_2^2H_{vv}
+2\rho\sigma_1\sigma_2H_{wv}
\Big)\\
& 
+\left(\alpha_1-r+ \frac12\sigma_1^2\right)H_w
+
\left(\alpha_2 -r + \rho\sigma_1\sigma_2-\frac12\sigma_2^2\right)H_v
+(\alpha_1-r)H \notag \\
&-\gamma
\Bigg[
\sigma_1^2 e^w
\left(
G^2+3GH+H^2+GH_w
\right)
+\sigma_2^2
\left(
e^w G^2_v+u_vH_v
\right) \notag \\
&\qquad\qquad
+\rho\sigma_1\sigma_2
\Big(
e^w G_v(G+H)
+u_v(H+H_w)
+e^w\left(GG_v+HG_v+GH_v\right)
\Big)
\Bigg]. \label{equ H Gw}
\end{align}
Similarly, define $J = G_v$, then by taking $\partial_v$ on both sides, we have 
\begin{align}
0={}&
\frac12\Big(
\sigma_1^2J_{ww}
+\sigma_2^2J_{vv}
+2\rho\sigma_1\sigma_2J_{wv}
\Big)\\
& 
+\left(\alpha_1-r+ \frac12\sigma_1^2\right)J_w
+
\left(\rho\sigma_1\sigma_2-\frac12\sigma_2^2\right)J_v
+(\alpha_1-r)J \notag \\
&-\gamma
\Bigg[
\sigma_1^2e^w
\left(
2GJ+JH+GJ_w
\right)
+\sigma_2^2
\left(
u_{vv}J+u_vJ_v
\right) \notag \\
&\qquad\qquad
+\rho\sigma_1\sigma_2
\Big(
u_{vv}(G+G_w)
+u_v(J+J_w)
+e^w\left(J^2+GJ_v\right)
\Big)
\Bigg].\label{equ J Gv}
\end{align}
Therefore, define $L: = C H \pm J = C G_w \pm G_v = (C z_1) u_{11}\pm z_2 u_{12}$, we only need prove that $L <0$ in $\Omega_N$ by contradiction, where 
$$\Omega_N:= \{ (w, v) \in \mathbb{R}^2| (e^w, e^v) \in {\bf NN}   \}$$ 
is the corresponding open set ${\bf NN}$ under the new coordinate.
Since $z_1, z_2\in [1/M, M]$, this implies our targeted inequalities. We will prove this inequality by the comparison principle.  

{\bf Step 1}. We first derive the operator such that the function $L$ satisfies. 

From \eqref{equ H Gw} and \eqref{equ J Gv} we have
\begin{align}
   0 =  & \frac{1}{2} \Big(\sigma_1^2 {L}_{ww} +  \sigma_2^2  {L}_{vv} + 2 \rho \sigma_1 \sigma_2 {L}_{wv} \Big)\\
   & +\left(\alpha_1-r+ \frac{1}{2} \sigma_1^2 \right){L}_w +\left(\alpha_2-r+\rho\sigma_1\sigma_2-\frac12\sigma_2^2\right)L_v
    +(\alpha_1-r)L \label{equ L 0} \\
   & - \gamma \bigg[ \sigma_1^2 e^w \Big( 2 GL + HL + G L_w \Big) + \sigma_2^2 (u_v L_v ) +\rho\sigma_1 \sigma_2 \Big(u_v (L+ L_w) + e^w (J L + G L_v )\Big)\bigg]\\
   & - C \gamma e^w \bigg[ \sigma_1^2(G^2 + G H) + \sigma_2^2(G_v^2 ) + \rho \sigma_1\sigma_2 (2 G G_v + G_v H)\bigg]\label{equ L 2} \\
   & \mp \bigg( \sigma_2^2 u_{vv} J + \rho \sigma_1\sigma_2 u_{vv} (G+G_w)  \bigg). 
   \label{equ L 3}
\end{align}
On the one hand, we see from Theorem \ref{thm F eta} that $\nabla^2 u$ is bounded. By noticing that 
$$G = u_1,\ G_w = z_1 u_{11},\ u_v = z_2 u_2 = e^v u_2,\ u_{vv} = z^2_2 u_{22} + z_2 u_2 $$
and $z_1, z_2$ are bounded, we see that 
\eqref{equ L 3} is bounded by a constant independent of $C$.

On the other hand, we estimate \eqref{equ L 2}. Noticing that $G\geq 1-\mu_1$ and  
\begin{align}\label{equ esti nabla G}
\|\nabla G\|_\infty \leq M \|\nabla^2 u\|_\infty< \infty,     
\end{align}
we have   
\begin{align}
      & - C\Big[ \sigma_1^2(G^2 + G H) + \sigma_2^2(G_v^2 ) + \rho \sigma_1\sigma_2 (2 G G_v + G_v H)\Big]\\
     = & - C\Big[ \sigma_1^2 G^2  + \sigma_2^2G_v^2  + 2\rho \sigma_1\sigma_2  G G_v \Big] -\Big[(\sigma_1^2 G +\rho \sigma_1\sigma_2 G_v) (L \mp J) \Big] \\
     = & - C\Big[ \sigma_1^2 G^2  + \sigma_2^2G_v^2  + 2\rho \sigma_1\sigma_2  G G_v \Big] \pm (\sigma_1^2 G +\rho \sigma_1\sigma_2 G_v) G_v
     -(\sigma_1^2 G +\rho \sigma_1\sigma_2 G_v) L \\
     \leq &-C \delta (G^2 +G_v^2) - (\sigma_1^2 G +\rho \sigma_1\sigma_2 G_v) L, \label{equ esti L2}
\end{align}
where $C$ is a sufficiently large, and $\delta$ is independent of $C$. 

Noticing that 
$$ G^2 +G_v^2 \geq \pm G G_v + \frac{1}{2} G^2 + \frac{1}{2} G_v^2 \geq \pm G G_v + \frac{1}{2} G^2, $$
we see
\begin{align}
    \eqref{equ esti L2} 
    \leq & \mp C\delta G G_v - \frac{C \delta}{2} G^2  - (\sigma_1^2 G +\rho \sigma_1\sigma_2 G_v) L\\
    = & - C \delta G (L - C G_w ) - \frac{C \delta}{2} G^2  - (\sigma_1^2 G +\rho \sigma_1\sigma_2 G_v) L\\
    = & -\Big(C \delta G +  (\sigma_1^2 G +\rho \sigma_1\sigma_2 G_v) \Big) L  + C^2 \delta G G_w - \frac{C \delta}{2} G^2
\end{align}
Due to \eqref{equ esti nabla G}, 
and $G_w = G_1 e^w = F_{11} e^w< 0$, $G\geq 1-\mu_1>0$, we see that for sufficiently large $C>0$, \eqref{equ L 0}-\eqref{equ L 3} implies 
\begin{align}
  \frac{1}{2} \Big({L}_{ww} +  {L}_{vv} + 2 \rho {L}_{wv} \Big) + f(w, v) L_w + g(w, v) L_v - h(w, v) L > 0 \label{equ L CP}
\end{align}
for some $h\geq 0$. Then the maximum principle indicates that the positive maximum cannot be achieved in the interior of the no-action region. 

{\bf Step 2}. We then prove $L< 0$ in $\Omega_N$.  

We only need to show $L\leq 0$, then due to the strong maximum principle, $L<0$ in $\Omega_N$. 
We prove by contradiction. 
If $\sup \limits_{\Omega_N} L(w, v) = 2 \delta> 0$, then we can find a sequence  $(w_i, v_i) \in {\Omega_N}$, $i\geq 1$, such that $ \lim \limits_{i \to \infty} L(w_i, v_i) = \sup \limits_{\Omega_N} L(w, v)$, while $(w_i, v_i)$ converges to $(w_0, v_0)\in \overline{\Omega}_N$. 

{\bf Case 1}. If  $(w_0, v_0)\in {\Omega}_N$. That is, at $P := (e^{w_0}, e^{v_0})$, $u_i\in (1-\mu_i, 1+\lambda_i)$, $i = 1, 2$.  Then \eqref{equ L CP} implies a contradiction.

{\bf Case 2}. The point $(w_0, v_0)$ is on the trading boundary, but not at the corner points. That is, at $P := (e^{w_0}, e^{v_0})$,  $u_i = 1-\mu_i$ or $1+\lambda_i
$, while $u_{j} \in (1-\mu, 1+\lambda)$, $\{i, j\} = \{1, 2\}$. Then from Proposition \ref{Prop F_i regu}, $u_{12}$ is continuous around $P$. Since $u_{11}\leq 0$ globally, we see $L(w_i, v_i)\leq \delta$ for $i$ large enough, that is a contradiction. 

{\bf Case 3}. The point $(w_0, v_0)$ is at the corner points. Without loss of generality, assume $u_1 = 1-\mu_1$ and $u_2 = 1+\lambda_2$. 

By definition,  
\begin{align}
\theta - \mathcal{L} u +\mathcal{A} u = 0
\end{align}
on those points $(w_i, v_i)$, $i \geq 1$. 

We have the following lemma.
\begin{lemma}\label{lem strict nega nabla2}
   $(z_1^2 u_{11} + z_2^2 u_{22} + 2 \rho z_1 z_2 u_{12})\bigg|_{(w_i, v_i)}$, $i \geq 0$ has a uniform negative upper bound $-\epsilon<0$. 
\end{lemma}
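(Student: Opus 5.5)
We would prove the bound by using the equation in {\bf NN} and showing that the drift and quadratic part of $\theta-\mathcal{L}u+\mathcal{A}(u)$ stay away from $\theta$ near the corner. At each $(w_i,v_i)$, which lies in {\bf NN}, we have
\begin{align*}
\frac12\big(\sigma_1^2 z_1^2 u_{11}+\sigma_2^2 z_2^2 u_{22}+2\rho\sigma_1\sigma_2 z_1z_2u_{12}\big)
=\theta-\sum_{k=1}^2(\alpha_k-r)z_ku_k+\mathcal{A}(u).
\end{align*}
The right-hand side involves only $z$ and $\nabla u$, both continuous by Proposition \ref{prop u MF} and Theorem \ref{thm F eta}. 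As $i\to\infty$ it converges to its value at the corner point $P$, where $u_1=1-\mu_1$ and $u_2=1+\lambda_2$. It therefore suffices to show that the corner value
\begin{align*}
\Theta(P):=\theta-\sum_{k}(\alpha_k-r)P_ku_k(P)+\mathcal{A}(u)(P)
\end{align*}
is strictly negative. The uniform bound $-\epsilon$ then holds for all large $i$ by continuity. By concavity, each finite $i$ gives a nonpositive quadratic form, and strictness can be handled in the same way, since each $(w_i,v_i)$ lies in {\bf NN} where $u$ is smooth. Passing from the weighted form with $\sigma_k$ to the unweighted one in the statement uses the concavity comparison from Step 3 of Theorem \ref{thm F eta}: both are comparable to $-(u_{11}+u_{22})$ on $[1/M,M]^2$.

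To show $\Theta(P)<0$, we would use the corner regions. Near $P$ the region {\bf SB} is a quarter-plane $[s_1^-,\infty)\times(-\infty,b_2^+]$, on which $\nabla u\equiv(1-\mu_1,1+\lambda_2)$ is constant. So $u$ is affine there and $\nabla^2u=0$. The variational inequality \eqref{equ PDE u} holds in the viscosity sense, and $\theta-\mathcal{L}u+\mathcal{A}(u)\ge0$ holds pointwise a.e. in {\bf SB}. Since $\nabla^2 u=0$ in {\bf SB}, this gives $\Theta(z)\ge 0$ there, and hence $\Theta(P)\ge 0$ by continuity. The claimed inequality is the opposite one, so the sign of $\Theta$ has to be read with care. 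The quantity we actually need is the right-hand side of the display above, which equals $-\Theta$. The goal is then to show $\Theta(P)>0$ strictly, so that the quadratic form tends to $-\Theta(P)<0$.

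The main obstacle is this strictness, namely excluding $\Theta(P)=0$. Our plan is an argument by contradiction. Suppose $\Theta(P)=0$. Along the boundary half-lines of {\bf SB} (the vertical one $z_1=s_1^-$ and the horizontal one $z_2=b_2^+$), $\Theta$ is an explicit function of $z$ with $\nabla u$ frozen. Concretely, $\Theta$ is a quadratic polynomial in $(z_1,z_2)$ with positive definite quadratic part $\frac{\gamma}{2}\,\mathrm{diag}(\sigma)\,\Sigma_\rho\,\mathrm{diag}(\sigma)$ in the scaled variables $z_ku_k$. It is therefore strictly convex in $z$. If it vanished at the corner $P$ while being $\ge0$ on the whole quarter-plane {\bf SB}, then $P$ would be a boundary minimum of a strictly convex function over that quarter-plane. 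This forces the gradient of $\Theta$ at $P$ to point into the cone.

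We would then combine this with the behaviour in the adjacent regions {\bf SN} and {\bf NB}. There $u_1$ (respectively $u_2$) is frozen while the other derivative varies monotonically by concavity, and Proposition \ref{Prop F_i regu} gives one-sided $C^1$ control of the free derivative. Using this, we aim to show that $\Theta$ would become negative at nearby points of {\bf SN} or {\bf NB} lying on the free boundary, contradicting $\theta-\mathcal{L}u+\mathcal{A}(u)\ge0$ there. If this direct contradiction fails, the fallback is a Hopf-type argument. Applied to $G=u_1$ on {\bf NN} near $P$, using $\mathcal{L}_2G+B=0$ and $G>1-\mu_1$ inside, it would force $u_{11}$ to be strictly negative near the boundary, which again makes the quadratic form uniformly negative.
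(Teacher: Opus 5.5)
Your proposal has a genuine gap. You try to prove the bound unconditionally, and in that form the statement is false, because you never use the hypothesis under which the lemma is stated. The points $(w_i,v_i)$ are not arbitrary points of $\Omega_N$ approaching the corner. They are a maximizing sequence for $L=CG_w\pm G_v$ in the proof by contradiction of Case 3, with $\sup_{\Omega_N}L=2\delta>0$, so $L(w_i,v_i)\ge\delta$ for large $i$.

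Without this hypothesis your own computation refutes the claim:
\begin{itemize}
\item In \textbf{NN}, the identity you wrote says that $\frac12 S=\Theta$, where $S:=\sigma_1^2z_1^2u_{11}+\sigma_2^2z_2^2u_{22}+2\rho\sigma_1\sigma_2z_1z_2u_{12}$. The right-hand side is $\Theta$ itself, not $-\Theta$; the sign reinterpretation in your second paragraph is an error.
\item You correctly showed $\Theta(P)\ge 0$ from the affine region \textbf{SB}.
\item Concavity gives $S\le 0$ in \textbf{NN}, so $\Theta(P)=\lim_i\frac12 S(e^{w_i},e^{v_i})\le 0$.
\end{itemize}
Hence $\Theta(P)=0$ always, and the form tends to $0$ along every sequence in \textbf{NN} converging to a corner. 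This is exactly what case (2) of the corner-point analysis in Section~\ref{sec classical} exploits to get $\nabla^2u\to 0$ at $P$. So the step you call the main obstacle, excluding $\Theta(P)=0$, cannot be carried out. None of your three routes can rescue it: the strict-convexity argument on \textbf{SB}, the \textbf{SN}/\textbf{NB} comparison, or the Hopf fallback (which would need $u_{11}$ bounded away from $0$, contradicting $u_{11}\to 0$).

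The missing idea is short:
\begin{itemize}
\item By concavity, $G_w=z_1u_{11}\le 0$. So $L(w_i,v_i)\ge\delta$ forces $\pm G_v\ge\delta-CG_w\ge\delta$.
\item Since $G_v=e^{v}u_{12}$, this gives $|u_{12}|\ge\delta e^{-v_i}\ge\delta/M$ at $(e^{w_i},e^{v_i})$.
\item Concavity gives $u_{11}u_{22}\ge u_{12}^2$, and AM--GM then gives $z_1^2u_{11}+z_2^2u_{22}\le-2z_1z_2\sqrt{u_{11}u_{22}}\le-2z_1z_2|u_{12}|$.
\item Adding $2\rho z_1z_2u_{12}\le 2|\rho|z_1z_2|u_{12}|$, the form is at most $-2(1-|\rho|)z_1z_2|u_{12}|\le-2(1-|\rho|)\delta/M^3$, since $\rho\in(-1,1)$.
\end{itemize}
Your observation $\Theta(P)\ge 0$ from \textbf{SB} is not part of the lemma's proof. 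It is the other half of the contradiction that the lemma is used for in Case 3: the lemma gives $\Theta(P)<0$, while \textbf{SB} gives $\Theta(P)\ge 0$.
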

\begin{proof}
  Because of the concavity of $u$, we have $G_w(w_i, v_i) = e^{w_i} G_1(w_i, v_i) = e^{w_i} u_{11}(e^{w_i}, e^{v_i})\leq 0$, then from $L\geq \delta >0$, we have $G_v(w_i, v_i) \geq \delta$. That is, $u_{12}(e^{w_i}, e^{v_i}) = e^{-v_i} G_{v}(w_i, v_i) \geq \delta e^{v_i} \geq \delta/M>0$. From the concavity of $u$, we see at the point $(e^{w_i}, e^{v_i})$, 
  \begin{align}
      &z_1^2 u_{11} + z_2^2 u_{22} + 2 \rho z_1 z_2 u_{12}\\
\leq & - 2 z_1 z_2 \sqrt{u_{11} u_{22}}  + 2 \rho z_1 z_2 u_{12}\\
\leq & -2(1-|\rho|) z_1 z_2 u_{12}\\
\leq & -\epsilon
  \end{align}
  for some $\epsilon>0$. 
\end{proof}
Choose one point $Q = P+ (\eta,  -\eta)$, where $\eta>0$ is a small constant such that $Q$ is in the interior of $\bf SB$ region, which is close to the corner point $P$, we see $\nabla^2 u(Q) = 0$. Since first order derivatives are continuous, we shall derive from Lemma \ref{lem strict nega nabla2} that
\begin{align}
 \theta - \mathcal{L} u (Q) = \theta \leq \theta  - \mathcal{L} u (e^{w_i}, e^{v_i}) -\epsilon = -\mathcal{A}(u)(e^{w_i}, e^{v_i} ) -\epsilon < -\mathcal{A}(u)(Q)  
\end{align}
when $\eta$ is small enough.  That contradicts the variational inequality \eqref{equ PDE u}.

\section{Classical solution.}\label{sec classical}
We will show that the classical second-order derivatives exist and are $C^\alpha$ on in the solvency region $[1/M, M]^2$. To show this, we only need to consider the points on the free boundaries. 

{\bf (i). Non-corner free boundaries.}

Without loss of generality, we consider point $P = (l_1^+(z_2), z_2)$ where  $u_1 = 1-\mu_1$ and $u_2 \in (1-\mu_2, 1+\lambda_2)$ at this point $P$.  
Then we have from Lemma \ref{lem hatu sub}, Lemma \ref{lem u sup}, and $\hat u = u$ that 
\begin{align}
 \theta - \frac{1}{2}(\sigma_1^2z_1^2  u_{11} +\sigma_2^2 z_2^2  u_{22} + 2\rho \sigma_1\sigma_2z_1 z_2 u_{12} ) + f = 0, \text{ in $R$}.   \label{equ 2nd ope u SN}
\end{align}
From the Sobolev embedding theorem and Theorem \ref{thm F eta}, we see that $u$ is in $C^{1, \alpha}(R)$, $\forall \alpha \in (0, 1)$. Then due to  the  Lipschitz continuity of $l_1^+(z_2)$ from Section \ref{sec Lipschitz}, 
we see from the definition \eqref{equ f defi} that function $f$ is $C^\alpha$, $\forall \alpha \in (0,1)$, and thus from classical Schauder estimates (see \cite{gilbarg1998elliptic}), $u$ is locally $C^{2, \alpha}$.

{\bf (ii). Corner points.}

We will prove that the second-order derivatives satisfy $|u_{11}(z_1, z_2)|,|u_{12}(z_1, z_2)|, |u_{22}(z_1, z_2)| \leq C \|(z_1, z_2) -P\|^\alpha$ for $(z_1, z_2)\in B_\epsilon(P)$ for some small $\epsilon>0$ and the $\bf SB$ region corner point $P = (P_1, P_2) = (s_1^-, b_2^+)$. 

Without loss of generality, we consider $u_{11}$. The proof for $u_{22}$ is similar, while $|u_{12}|\leq \sqrt{u_{11}u_{22}}$ from the concavity. 
Choose one point $Q = (z_1, z_2)$ in $B_\epsilon(P)$. 

\vspace{16pt}
(1). If this point $Q$ is in the interior of  sell region of stock 1, i.e., ${(\bf SB \cup \bf SN)}^o = {(\bf SB \cup \bf SN)} \setminus \{(l_1^+(z_2), z_2), z_2 \in [1/M, M]\}$, then immediately we have $u_{11} = 0$ from the variational inequality. 

\vspace{16pt}
(2). Consider $Q \in \overline {\bf NN}\cap B_\epsilon(P)$ and $Q\neq P$. We see that 
\begin{align}\label{equ Q NN}
    \theta -\mathcal{L} u + \mathcal{A}(u)|_{Q} = 0.
\end{align}
For any sufficiently small $\eta$, $Q' = P + (\eta, -\eta)$ is in interior of ${\bf SB}$, thus we have $\nabla^2 u|_{Q'} = 0$ with 
\begin{align}\label{equ Q' Sb}
   \theta -\mathcal{L} u + \mathcal{A} (u) |_{Q'} \geq 0.
\end{align}
Therefore, subtract \eqref{equ Q NN} from \eqref{equ Q' Sb}, we see that 
\begin{align}
    &  \big(\sigma_1^2 z_1^2 u_{11} + \sigma_2^2  z_2^2 u_{22} + 2\rho \sigma_1\sigma_2 z_2 z_2 u_{12}\big) |_{Q} \\
 \geq & -\sum_{i = 1}^2 (\alpha_i-r) z_i u_i \Big|_{Q} +  \sum_{i = 1}^2 (\alpha_i-r) z_i u_i \Big|_{Q'}  - \mathcal{A} (u)\Big|_{Q'} +\mathcal{A} (u)\Big|_{Q}.
\end{align}
Since $u \in C^{1, \alpha}$, 
we have at point $Q$, $$ \sigma_1^2 z_1^2 u_{11} + \sigma_2^2  z_2^2 u_{22} + 2\rho \sigma_1\sigma_2 z_2 z_2 u_{12} \geq - C \|(z_1, z_2)-P\|^\alpha.$$  
From the concavity of $u$, we see 
that the left hand side is less than $\delta (u_{11} + u_{22})<0$ for some small $\delta >0$, Since $u_{22}\leq 0$, we have that $0\geq u_{11} \geq -\frac{C}{\delta}\|(z_1, z_2)-P\|^\alpha$. 

\vspace{16pt}
(3). Suppose $Q$ is in the interior of $\bf NB$ region, i.e., 
$${\bf NB}^o = {\bf NB} \setminus \Big(\Big\{\big(l_1^+(z_2), z_2\big), z_2 \in [1/M, M]\Big\} \bigcup \Big\{\big(z_1, l_2^-(z_1)\big), z_1 \in [1/M, M]\Big\}\Big), $$  then we see from the buy condition $u_2 = 1+\lambda_2$ that 
$$u_{11}(z_1, z_2) = u_{11}\big(z_1, l_2^-(z_1)\big). $$ 
Due to the Lipschitz continuity of free boundary, we have $\|(z_1, l_2^-(z_1))-P\|\leq C \|(z_1, z_2)-P\|$ for some $C> 0$. Then we derive the H{\"o}lder continuity from case (2).

\vspace{16pt}
(4). We consider the case $Q = (l_1^+(z_2), z_2) = (s_1^-, Q_2)$ for some $z_2 \in [1/M, P_2)$. 

We shall prove that the classical derivative $u_{11}$ exists and equals 0. 
On the one hand, we see that the right derivative $u_{11}(s_1^- +, Q_2) = 0$ from the definition of $\bf SB$ region. On the other hand, from the case (3), we see that for any point $Q': = (s_1^--\Delta, Q_2)$ in $\bf NB$,
\begin{align}
    u(Q) - u(Q') = & u(s_1^-, q) - u(s_1^--\Delta, q),
\end{align}
where $q: = \min \{l_2^-(s_1^-), l_2^-(s_1^--\Delta) \}$. From the Lipschitz continuity of free boundary,  we see $|q- l_2^-(s_1^-)|\leq C \Delta$. Therefore, the line segment 
\begin{align}
    \{(z_1, z_2) | z_1 \in (s_1^--\Delta, s_1^-), z_2 = q \}
\end{align}
is inside the ball  $B_{\sqrt{C^2+1} \Delta}(P)$ and the region $\bf NB\cup NN$. From the mean value theorem and case (3), 
\begin{align}
    |u(Q) - u(Q')- (1-\mu_1)\Delta|  \leq C \Delta^{2+\alpha}. 
\end{align}
By sending $\Delta \to 0$, we have the right derivative $u_{11}(s_1^- +, Q_2) = 0$. 

\vspace{16pt}
(5). We lastly consider the point $P = (s_1^-, b_2^+)$ ans how that $u_{11}(P)$ exists and equals 0. 

On the one hand, by definition of the {\bf SB} region, the right derivative $u_{11}(s_1^- +, b_2^+) = 0$. On the other hand, the point $P': = (s_1^- -\Delta, b_2^+)$ is in interior of $\bf NB \cup \bf NN$. Then similar to case (4), we can derive the left second order derivative $u_{11}$ to be 0.

\section{Summary}\label{sec Summary}
In the above, we proved that the value funtion $u \in C^{2, \alpha}\big([1/M, M]^2\big)$ for any $\alpha \in (0, 1)$. 

\bibliography{refe_cost}

\end{document}